\documentclass[11pt]{amsart} 
\usepackage{hyperref}
\usepackage{color}
\usepackage{amsmath, amsthm,url,
  amssymb,setspace,epsfig, mathrsfs,fontenc, 
  fullpage, MnSymbol}
\usepackage[alphabetic]{amsrefs}

\DefineSimpleKey{bib}{myurl}
\newcommand\myurl[1]{\url{#1}}
\BibSpec{webpage}{
  +{}{\PrintAuthors} {author}
  +{,}{ \textit} {title}
  +{}{ \parenthesize} {date}
  +{,}{ \myurl} {myurl}
}

\newtheorem{lem}{Lemma}
\newtheorem{exam}{Example}
\newtheorem{conj}{Conjecture}
\newtheorem{prop}{Proposition}
\newtheorem{thm}{Theorem}
\newtheorem{rem}{Remark}

\newtheorem{cor}{Corollary}
\newtheorem{defe}{Definition}

\newcommand{\nc}{\newcommand}
\nc{\renc}{\renewcommand}
\nc{\ssec}{\subsection}
\nc{\sssec}{\subsubsection}
\nc{\on}{\operatorname}
\nc {\ra} {\rightarrow}
\nc {\inj} {\hookrightarrow}
\nc {\lra} {\longleftrightarrow} 
\nc         {\rar}[1]       {\stackrel{#1}{\longrightarrow}}

\nc {\Sl}{\mathfrak{sl}}
\nc{\fm}{\mathfrak{m}}
\nc{\fg}{\mathfrak{g}}
\nc{\ft}{\mathfrak{t}}
\nc{\fk}{\mathfrak{k}}
\nc{\fI}{\mathfrak{i}}
\nc{\fh}{\mathfrak{h}}
\nc{\fb}{\mathfrak{b}}
\nc{\fu}{\mathfrak{u}}
\nc{\fn}{\mathfrak{n}}
\nc{\hfg}{\widehat{\fg}}
\nc{\hfh}{\widehat{\fh}}

\nc {\hH}{{\check{H}}}
\nc {\hB}{{\check{B}}}
\nc {\hN}{{\check{N}}}
\nc {\hG}{{{\check{G}}}}
\nc {\cfg}{\check{\fg}}
\nc {\cfb}{\check{\fb}}
\nc {\cfn}{\check{\fn}}
\nc {\cfh}{\check{\fh}}
\nc {\clambda}{\check{\lambda}}

\nc {\bone}{\mathbf{1}}
\nc {\bu}{\mathbf{u}}
\nc {\bv}{\mathbf{v}}
\nc {\bw}{\mathbf{w}}

\nc{\bC}{\mathbb{C}}
\nc{\bM} {\mathbb{M}}
\nc{\bU} {\mathbb{U}}
\nc{\bV} {\mathbb{V}}
\nc{\bW}{\mathbb{W}}
\nc{\bL}{\mathbb{L}}
\nc {\bZ} {\mathbb{Z}}
\nc {\bGm} {\mathbb{G}_m}

\nc {\cD}{\mathcal{D}}
\nc {\cE}{\mathcal{E}}
\nc {\cF}{\mathcal{F}}
\nc {\cG}{\mathcal{G}}
\nc {\cK}{\mathcal{K}}
\nc{\cZ}{\mathcal{Z}}
\nc {\cDt}{\cD^\times} 
\nc {\cA} {\mathcal{A}}
\nc {\cI}{\mathcal{I}}
\nc {\cR}{\mathcal{R}}
\nc {\cJ}{\mathcal{J}}
\nc {\cS}{\mathcal{S}}
\nc {\cO}{\mathcal{O}}
\nc {\cP}{\mathcal{P}}

\nc {\tU} {\widetilde{U}}


\nc {\Ind}{\mathrm{Ind}}
\nc {\RS}{\mathrm{RS}}
\nc {\Fun}{\mathrm{Fun}}
\nc {\Lie}{\mathrm{Lie}}
\nc {\Hom}{\mathrm{Hom}}
\nc {\Vac}{\mathrm{Vac}}
\nc {\crit}{\mathrm{crit}}
\nc {\Res}{\mathrm{Res}} 
\nc {\ev}{\mathrm{ev}}
\nc{\Spec}{\mathrm{Spec}\,}
\nc {\ord}{\mathrm{ord}}
\nc {\Op}{\mathrm{Op}}
\nc{\Loc}{\mathrm{Loc}}
\nc {\MOp}{\mathrm{MOp}}
\nc {\Conn}{\mathrm{Conn}}
\nc {\MT}{\mathrm{MT}}
\nc{\Sym}{\mathrm{Sym}}
\nc {\modd}{\mathrm{mod}}
\nc {\ad}{\mathrm{ad}}
\nc {\Tr}{\mathrm{Tr}}
\nc{\Kil}{\mathrm{Kil}}
\nc {\End}{\mathrm{End}}
\nc {\can}{\mathrm{can}}
\nc {\Gr}{\mathrm{Gr}}
\nc {\Aut}{\mathrm{Aut}}
\nc {\Der}{\mathrm{Der}}
\nc {\gr}{\mathrm{gr}}
\nc {\sm}{\mathrm{sm}}
\nc {\Map}{\mathrm{Map}}
\nc {\Hitch}{\mathrm{Hitch}}
\nc {\Higgs}{\mathrm{Higgs}}
\nc {\T}{\mathrm{T}}
\nc{\Bun}{\mathrm{Bun}}

\nc {\bP} {\bar{P}}
\nc {\bJ}{\bar{J}}

\usepackage[all]{xy}

\nc {\quo}{\mathopen{ /\!/}}
\nc {\llp} {\mathopen{ (\!(}}
\nc {\rrp} {\mathopen{ )\!)}}
\nc {\llb} {\mathopen{ [\![}}
\nc {\rrb} {\mathopen{ ]\!]}}
\nc {\lc} {\mathopen{:\!}}
\nc {\rc}{\mathopen{\!:}}
\nc{\fgbb} {\fg\llb t \rrb}
\nc{\fgpp} {\fg\llp t \rrp}
\nc{\fhbb} {\fh\llb t \rrb}
\nc{\fhpp} {\fh\llp t \rrp}
\nc{\bCpp} {\bC \llp t \rrp}
\nc{\bCbb} {\bC \llb t \rrb}
\nc{\hNpp} {\hN \llp t \rrp}
\nc{\hNbb} {\hN \llb t \rrb}
\nc{\cfbpp} {\cfb \llp t \rrp}
\nc{\cfbbb} {\cfb \llb t \rrb}

\nc {\Gpp} {G\llp t \rrp}
\nc {\Npp} {N\llp t \rrp}
\nc {\Gbb} {G\llb t \rrb}
\nc {\cC} {\mathcal{C}}
\nc {\cB}{\mathcal{B}}
\nc{\ocB}{\bar{\cB}}
\nc{\ocA}{\bar{\cA}}
\nc {\bR}{\mathbb{R}}
\nc {\bQ} {\mathbb{Q}}
\nc {\cL}{\mathcal{L}}
\nc {\bI}{\bar{I}}

\nc{\tg} {\mathtt{g}}
\nc {\tc}{\mathtt{c}}

\nc {\oG} {\overline{G}}
\nc{\ofg}{\overline{\fg}} 

\nc{\Fq} {\mathbb{F}_q}
\nc{\Fqt}{\Fq\llp t \rrp}
\nc{\ocK}{\overline{\cK}}
\nc{\Gal}{\mathrm{Gal}}
\nc{\uG}{\underline{G}}

\newcommand{\quash}[1]{}  

\begin{document}
\title{Preservation of depth in local  geometric Langlands correspondence} 
\author{Tsao-Hsien Chen and Masoud Kamgarpour}

\email{chenth@math.northwestern.edu}
\email{masoud@uq.edu.au}

\date{\today}

\subjclass[2010]{17B67, 17B69, 22E50, 20G25}

\keywords{Local geometric Langlands, Moy-Prasad Theory, slope of connections, opers, affine vertex algebras, Segal-Sugwara operators, Hitchin's fibration, twisted groups}

\begin{abstract}
It is expected that, under mild conditions,  local Langlands correspondence preserves depths of representations. In this article, we formulate a conjectural geometrisation of this expectation. We prove half of this conjecture by showing that the depth of a categorical representation of the loop group is less than or equal to the depth of its underlying geometric Langlands parameter. A key ingredient of our proof is a new definition of the slope of a meromorphic connection, a definition which uses opers. In the appendix, we consider a relationship between our conjecture and Zhu's conjecture on non-vanishing of the Hecke eigensheaves produced by Beilinson and Drinfeld's quantisation of Hitchin's fibration for non-constant groups.
\end{abstract} 

\maketitle 

\tableofcontents

\newpage

\section{Introduction}

Let $F$ be a local non-Archimedean field like $\mathbb{Q}_p$ or $\mathbb{F}_q\llp t \rrp$ and $G$  a connected reductive group over $F$. The local Langlands correspondence is a conjectural relationship between two types of data. The first data are, roughly speaking, equivalence classes of homomorphisms from the absolute Galois group of $F$ to $\hG$, the Langlands dual group of $G$. These are sometimes called the \emph{local Langalnds parameters}. The second data are the isomorphism classes of smooth irreducible representations of $G(F)$. 

Using the upper numbering ramification groups \cite{Serre}, one can define the notion of depth for local Langlands parameters. On the other hand, the notion of depth for smooth representations of $G(F)$ was defined by Moy and Prasad \cite{MP}, using the Bruhat-Tits Theory \cite{BT1, BT2}. It is expected that in most circumstances, the local Langlands correspondence preserves depth, cf. \cite{Yu-Ottawa}. The purpose of this paper is to examine the \emph{geometric} analogue of this expectation. To start, we give a leisurely introduction to Frenkel and Gaitsgory's proposal for geometrising the local Langlands correspondence. 

\ssec{Local geometric Langlands} Henceforth, let  $G$ be a simple complex group of adjoint type and  $\fg= \Lie(G)$. Let $\hG$ and $\cfg$ denote the Langlands dual objects. Let $\Gpp$ and $\fgpp$ denote the formal loop group and formal loop algebra. Let $\cK=\bCpp$ and let $\cDt=\Spec(\bCpp)$ denote the punctured formal disk.

 It has been known for a long time there is a deep and mysterious analogy between Galois representations and local systems, cf. appendix of \cite{Katz87}. By definition, a $\hG$-local system on $\cDt$ is a $\hG(\cK)$-gauge equivalence classes of operators of the form 
 \begin{equation} 
 \nabla = \partial_t + A, \quad \quad A \in \cfg(\cK)=\cfg\otimes \bCpp
 \end{equation} 
The gauge action of $g\in \hG(\cK)$ is defined by 
\begin{equation} 
g.(\partial_t + A) = \partial_t + gAg^{-1} - (dg) g^{-1}.
\end{equation} 
We let $\Loc_\hG(\cDt)$ denote the set of isomorphism classes of $\hG$-local systems on $\cDt$. It is widely accepted that these are the geometric analogues of the local Langlands parameters.

On the other side of the Langlands correspondence, Frenkel and Gaitsgory propose that the geometric analogue of smooth representations should be \emph{categorical representations} of  $G\llp t \rrp$.\footnote{There are other proposals for geometrising the local Langlands correspondence, cf.  \cite{Beilinson}, \cite{Langlands}.} We refer the reader to \cite[\S 20]{FG} and \cite[\S 1.3]{FrenkelBook} for this notion. 
A good toy model to keep in mind, for our purposes, is the action of $G$ on the category of $\fg$-modules. In more details, the group $G$ acts on its Lie algebra $\fg$ via the adjoint action. Therefore, every $g\in G$ acts on the category $\fg-\modd$   by sending a representation $\fg\ra \End(V)$ to the composition
\[
\fg\rar{\mathrm{Ad(g)}} \fg \ra \End(V).
\]
 This is an example of a categorical action. It is also possible to ``decompose" this categorical representation. Namely, let $\cZ(\fg)$ denote the centre of the universal enveloping algebra of $\fg$. For every character $\chi$ of $\cZ(\fg)$, let $\fg-\modd_\chi $ denote the full subcategory of $\fg-\modd$ consisting of those modules on which $\cZ(\fg)$ acts by the character $\chi$. Then $\fg-\modd_\chi$ is preserved under the action of $G$; thus, it is a sub-representation of $\fg-\modd$. 

We are interested, however, in categorical representations of the \emph{loop} group; thus, we should look at the action of $\Gpp$ on the category of $\fgpp=\fg\otimes \bCpp$-modules. Actually, it is fruitful to consider not the loop algebra itself but its universal central extension known as the \emph{affine Kac-Moody algebra} $\hfg$. Recall that representations of $\hfg$ have a parameter, an invariant bilinear form $\kappa$ on $\fg$, which is called the \emph{level}. We let $\hfg_\kappa-\modd$ denote the category (smooth) representations of $\hfg$ at the critical level. By a similar reasoning as in the previous paragraph, $\hfg_\kappa-\modd$ carries a natural action of the loop group $\Gpp$.

Representations of $\hfg$ corresponding to the bilinear form $\kappa=c$ which is equal to minus one half of the Killing form are called representations at the \emph{critical level}. The advantage of the critical level is that here the  (completed) universal enveloping algebra of $\hfg$ acquires a large centre. Thus, we may ``decompose" the representation $\hfg_c-\modd$ using the characters of the centre. 

More precisely, according to a  remarkable theorem of Feigin and Frenkel \cite{FF92}, the centre $\cZ_c$ of the completed universal enveloping algebra of the affine Kac-Moody algebra $\hfg$ at the critical level identifies canonically with the algebra of functions on the ind-scheme $\Op_\hG(\cDt)$ of \emph{$\hG$-opers} over the formal punctured disk. Thus, every point $\chi\in \Op_\hG(\cDt)=\Spec(\cZ_c)$ defines a character of the centre, and therefore, a categorical representation $\hfg_c-\modd_\chi$ of the loop group. These are suppose to be the categorical analogues of smooth representations of $p$-adic groups. In particular, the Grothendieck group of $\hfg_c-\modd_\chi$ should ``look like" a smooth representation.\footnote{According to \cite{FG}, these categorical representations also have descriptions in terms of (twisted) D-modules on generalised flag varieties. We will not use this alternative description.}

Having defined the geometric analogue of Langlands parameters and smooth representations, let us now relate them to each other.  To this end, we recall that opers are local systems plus additional data (see \S \ref{ss:conj} and \S \ref{s:slope} for more information on opers). Hence, one has a canonical forgetful map 
\begin{equation}
p:\Op_\hG(\cDt)\ra \Loc_\hG(\cDt).
\end{equation}
 The main result of \cite{FZ} states that this map is surjective. 
 
 It follows that for every geometric Langlands parameter $\sigma \in \Loc_\hG(\cDt)$, one has, in principal, many categorical representations of $\Gpp$; namely, the representations $\hfg_c-\modd_\chi$ where $\chi\in p^{-1}(\sigma)$.  
Frenkel and Gaitsgory conjecture that these categorical representations are equivalent. 
 Thus, given $\sigma$, there exists a canonical category $\cC_\sigma$ equipped with the action of $\Gpp$; moreover, $\cC_\sigma$ is equivalent to $\hfg_c-\modd_\chi$ for every $\chi\in p^{-1}(\sigma)$.  This is Frenkel and Gaitsgory's conjectural geometrisation of the local Langlands correspondence \cite{FG}.\footnote{Frenkel and Gaitsgory make their conjecture precise by exploiting connections with the global geometric Langlands correspondence. We will not consider this global characterisation.}  
 
 In a series of papers \cite{FG, FG09, FG09b,FG09c}, Frenkel and Gaitsgory examined the unramified and tamely ramified parts of the local geometric Langlands correspondence. These cases correspond to $\sigma$ being trivial or regular singular with unipotent monodromy. Almost all the results obtained in \emph{op. cit} are about the Iwahori integrable part of the theory.\footnote{For a representation of $\hfg$, being Iwahori-integrable is closely related to being in category $\cO$.}  As far as we know, very little is known about the correspondence for general $\sigma$, or even for unramified or tamely ramified $\sigma$, but beyond the Iwahori integrable situation. 
 We hope that the point of view of this text will be useful for further investigations of these cases.

\ssec{Main conjecture}\label{ss:conj}  We now explain how to geometrise the expectation that the local Langlands correspondence preserves depth. It turns out that $\hG$-local systems have a numerical invariant, called \emph{slope}, which is a natural candidate for the geometric analogue of depth of Langlands parameters. This notion goes back to the work of Katz and Deligne in early 70s. We refer the reader to \S \ref{s:slope} for a thorough discussion of various definitions of slope and the history of this invariant. For now, we give a definition of slope which we learned from  \cite{FGross}. A $\hG$-local system $\sigma$ has slope $a/b$ if the following holds. Pass to the extension given by adjoining the $b^\mathrm{th}$ root of $t$: $u^b=t$. Then the local system, written using the parameter $u$ in the extension, should have in its gauge equivalence class a representative which has a pole of order $a+1$ and its top polar part should not be nilpotent. We denote the slope of $\sigma$ by $s(\sigma)$.

 On the other side of the Langlands correspondence, it is straightforward to generalize Moy and Prasad's definition of depth to the categorical setting. Let us first recall the classical definition. In \cite{BT1, BT2}, Bruthat and Tits associated to $G$ a combinatorial object known as the \emph{Bruhat-Tits building} $\cB(G)$. For every $x\in \cB(G)$ and $r\in \bR_{\geq 0}$, Moy and Prasad \cite{MP} defined a subgroup $G_{x,r^+}\subset G(F)$. In addition, they defined the depth of a smooth representation of $G(F)$ by 
 \[
 \mathrm{depth}(V):=\inf \{ r\in \bR_{\geq 0}\, | \, \exists \, \textrm{$x\in \cB(G)$ such that $V^{G_{x,r^+}}$ is non-trivial}\}, 
 \]
 where $V^{G_{x,r^+}}\subseteq V$ denote the subspace $V$ consisting of vectors fixed under $G_{x,r^+}$. 
 
  It is easy to categorify the above definition. First of all, thanks to \cite{Yu-Model}, one knows that $G_{x,r^+}$ come equipped with a canonical \emph{smooth model}. In particular, this means that one can realise $G_{x,r^+}$ as the group of $\bC$-points of a proalgebraic group over $\bC$. Now if $\cC$ is a categorical representation of the loop group, we defined the depth $\cC$ by
 \begin{equation} 
 d(\cC):=\inf \{ r\in \bR_{\geq 0}\, | \, \exists \, \textrm{$x\in \cB(G)$ such that $\cC^{G_{x,r^+}}$ is non-trivial}\}. 
 \end{equation} 
  Here $\cC^{G_{x,r^+}}$ denotes the ``category of $G_{x,r^+}$ strongly equivariant objects" of $\cC$, cf. \cite[\S 20]{FG}, \cite[\S 10]{FrenkelBook}.

 In view of the above discussions, the following conjecture is the geometric analogue of the expectation that the local Langlands correspondence preserves depth. 

\begin{conj} \label{c:main}
Let $\sigma\in \Loc_\hG(\cDt)$ and let $\chi\in \Op_\hG(\cDt)$ be an oper whose underlying local system is $\sigma$ (i.e., $p(\chi)=\sigma$). Then 
\[
s(\sigma)=d(\hfg_c-\modd_\chi).
\] 
\end{conj}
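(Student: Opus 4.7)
The plan is to deduce both inequalities of the conjecture from a refinement of the Feigin-Frenkel isomorphism that is compatible with the Moy-Prasad filtration on the loop group. Exploiting the oper-theoretic definition of slope advertised in the abstract, introduce for each $x \in \cB(G)$ and each $r \geq 0$ a closed subscheme
\[
\Op_\hG^{x, \leq r}(\cDt) \subset \Op_\hG(\cDt)
\]
parametrising opers whose underlying local system has slope at most $r$ (relative to $x$). The core technical claim is that the induced critical-level module $\Vac_{x, r} := \Ind_{\fg_{x, r^+} \oplus \bC \bone}^{\hfg_c}(\bone)$ has endomorphism algebra $\Fun(\Op_\hG^{x, \leq r}(\cDt))$ compatibly with the Feigin-Frenkel identification of $\cZ_c = \Fun(\Op_\hG(\cDt))$. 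This is a Moy-Prasad analogue of the vacuum-module computation of Feigin-Frenkel; granting it, both inequalities follow from formal arguments.

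For $d(\hfg_c-\modd_\chi) \leq s(\sigma)$, set $r = s(\sigma) = s(p(\chi))$. The module $\Vac_{x, r}$ tautologically carries a strong $G_{x, r^+}$-equivariant structure by construction. Since its central support equals $\Op_\hG^{x, \leq r}(\cDt)$ and $\chi$ lies in this subscheme, the specialisation $\Vac_{x, r} \otimes_{\cZ_c} \bC_\chi$ is a non-zero strongly $G_{x, r^+}$-equivariant object of $\hfg_c-\modd_\chi$, whence $d(\hfg_c-\modd_\chi) \leq r$. The choice of $x$ here should be the point at which the slope is witnessed in the oper-theoretic definition.

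For the reverse inequality, take any $r > d(\hfg_c-\modd_\chi)$ and produce $x \in \cB(G)$ together with a non-zero strongly $G_{x, r^+}$-equivariant object $M \in \hfg_c-\modd_\chi$. Pro-unipotence of $G_{x, r^+}$ forces $M^{\fg_{x, r^+}} \neq 0$, and any non-zero invariant vector yields, via Frobenius reciprocity, a non-zero morphism $\Vac_{x, r} \to M$. Hence $\chi$ lies in the central support of $\Vac_{x, r}$, which by the core claim equals $\Op_\hG^{x, \leq r}(\cDt)$; in particular $s(\sigma) = s(p(\chi)) \leq r$, and taking infimum gives $s(\sigma) \leq d(\hfg_c-\modd_\chi)$.

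The crux of the argument is the identification $\End_{\hfg_c}(\Vac_{x, r}) = \Fun(\Op_\hG^{x, \leq r}(\cDt))$. The containment giving $d \leq s$ should follow by direct computation with Segal-Sugawara generators combined with the paper's oper-theoretic characterisation of slope, and is presumably the content of the half of the conjecture proved in the text. The reverse containment, required for $s \leq d$, is the main obstacle: one must produce enough $G_{x, r^+}$-invariant elements in the critical centre to separate opers of slope $> r$ from those of slope $\leq r$. A plausible route is to compare associated graded algebras via the Hitchin-type Poisson structure hinted at in the keywords; however, the fractional-slope cases require reconciling ramified Moy-Prasad data with Galois descent for opers on tame covers $u^b = t$, and I expect controlling this descent, rather than the integral-slope case, to be the principal source of difficulty.
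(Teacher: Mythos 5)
The statement you are addressing is Conjecture~\ref{c:main}, which the paper does not prove: the text establishes only the inequality $s(\chi)\leq d(\hfg_c-\modd_\chi)$ (Theorem~\ref{t:main}) and explicitly flags what is missing for the converse in \S\ref{ss:global}. Your outline contains both a direction mix-up and a genuine gap, so let me address both.

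Your ``core technical claim'' --- the identification $\End_{\hfg_c}(\Vac_{x,r})\cong\Fun(\Op_\hG^{\leq r})$, with $\Vac_{x,r}$ the paper's $\bU_{x,r}$ --- splits into two inclusions. The paper proves exactly one of them (Theorem~\ref{t:support}: the map $\cZ_c\ra\End_{\hfg_c}(\bU_{x,r})$ factors through $\Fun(\Op_\hG^r)$, via Segal--Sugawara bookkeeping), and it is \emph{this} inclusion that feeds the Frobenius-reciprocity argument in your third paragraph to give $s\leq d$. Your closing paragraph has the directions reversed: the Segal--Sugawara computation gives ``central support $\subseteq\Op_\hG^r$,'' which is the ingredient for $s\leq d$; the reverse containment is what one would want for $d\leq s$, not the other way around.

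The more serious gap is in your second paragraph, where you pass from ``$\chi$ lies in the central support of $\Vac_{x,r}$'' to ``the specialisation $\Vac_{x,r}\otimes_{\cZ_c}\bC_\chi$ is non-zero.'' This inference is invalid on its own: a module can be supported, even scheme-theoretically, on a closed subscheme and still have vanishing fibre at a point of its support, so knowing that the support equals $\Op_\hG^{\leq r}$ and that $\chi$ belongs to it is not enough. What is actually required --- and what the paper pinpoints in \S\ref{ss:global} as the bottleneck --- is that $\bU_{x,r}$ be \emph{free} (or at least faithfully flat) over its central support, so that every fibre is non-zero. This is known when $r$ is a non-negative integer and $x$ is hyperspecial (Drinfeld's argument via flatness of the Hitchin map, or the Eisenbud--Frenkel appendix to Mustata's paper using jet schemes), and the appendix of the present paper sketches how twisted Hitchin fibrations and Zhu's non-vanishing conjecture for Hecke eigensheaves might supply this input in general. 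No endomorphism-algebra identity by itself circumvents the need for this flatness/non-vanishing statement, so the $d\leq s$ half of your argument remains open exactly where the paper says the conjecture is open.
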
 


\ssec{Main results} In this paper, we prove one-half of the above conjecture. We will also make partial progress towards proving the other half. The key ingredient is a new definition of slope of local systems, a definition which uses opers. For now, the only fact we need to know about opers is that one can represent an oper $\chi\in \Op_\hG(\cDt)$ with an ordered $\ell$-tuple $(v_1,\cdots, v_\ell)$ where $v_i\in \bCpp$ and $\ell$ is the rank of $G$. Let us write $v_j=t^{-n_j}.h_i$ where $h_i\in \bCbb^{\times}$. Let $d_i$, $i=1,\cdots, \ell$, denote the exponents of the Lie algebra $\hfg$. 

 \begin{defe}\label{slope via oper}
  The slope of $\chi$ is defined by 
\[
s(\chi):= \sup \{ 0, \sup\{\frac{n_i}{d_i+1}-1\}_{i=1,\cdots, \ell} \}. 
\]
\end{defe} 

We let $\Op_\hG^r\subset \Op_\hG(\cDt)$ denote the subscheme of opers of slope less than or equal to $r$. Note that if $n$ is a positive integer, then $\Op_\hG^n$ equals the space $\Op_\hG^{\ord_n}$ of opers on $\cD$ with singularity less than or equal to $n$, cf. \cite[\S 3.7.7]{BD}, \cite{FG}. 
The following result states that the slope of an oper equals the slope of its underlying connection. 
\begin{prop} \label{p:slopeOper}
Let $\chi\in p^{-1}(\sigma)$. Then $s(\chi)=s(\sigma)$. 
\end{prop}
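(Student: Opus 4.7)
The strategy is to exploit the Drinfeld--Sokolov canonical form and directly compute the slope of the underlying connection by gauge--transforming along a power of the principal coweight after a ramified base change.

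Recall that $\chi$ admits a canonical representative
$$\nabla_\chi = \partial_t + p_{-1} + \sum_{i=1}^{\ell} v_i(t)\, p_i,$$
with $p_{-1}$ a regular nilpotent in $\cfg$ and $\{p_1,\dots,p_\ell\}$ a homogeneous basis of the Kostant slice $V^{can} = \cfg^{p_{-1}}$ of principal degrees $d_1,\dots,d_\ell$. Fix a principal semisimple element $\check\rho \in \cfg$ with $[\check\rho, p_{-1}] = -p_{-1}$ and $[\check\rho, p_i] = d_i p_i$, and put $r := \sup_i n_i/(d_i+1)$.

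First, suppose $r \le 1$. Then gauge-transforming $\nabla_\chi$ by $t^{\check\rho}$ produces a connection all of whose terms ($t^{-1}p_{-1}$, $t^{d_i-n_i} h_i p_i$, and $-t^{-1}\check\rho$) have pole of order at most one, since $d_i - n_i \ge -1$. Hence $\sigma$ is regular singular, and $s(\sigma) = 0 = s(\chi)$.

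Next, suppose $r > 1$. Write $r = a/b$ in lowest terms; then $a > b \ge 1$. Pulling $\nabla_\chi$ back along the cover $u^b = t$ and gauge-transforming by $u^{a\check\rho}$ yields
$$\partial_u \,+\, b\, u^{b-1-a}\, p_{-1} \,+\, \sum_{i} b\, u^{b-1-b n_i + a d_i}\, h_i(u^b)\, p_i \,-\, a\, u^{-1}\, \check\rho.$$
For $i \in S := \{i : n_i/(d_i+1) = a/b\}$ the $p_i$-exponent equals $b-1-a$, matching the $p_{-1}$-term; the other $p_i$-exponents are strictly larger; and the $\check\rho$-term has exponent $-1 > b-1-a$ because $a > b$. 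Hence the pole order is $a-b+1 \ge 2$ with leading coefficient
$$b\,\Bigl(p_{-1} + \sum_{i \in S} h_i(0)\, p_i\Bigr) \,\in\, b\cdot\bigl(p_{-1} + V^{can}\bigr).$$

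The remaining, most substantive step is to verify that this leading coefficient is non-nilpotent. By Kostant's theorem, the slice $p_{-1} + V^{can}$ consists entirely of regular elements, and restriction of invariants induces an isomorphism of polynomial algebras $\bC[\cfg]^{\hG} \to \bC[V^{can}]$. Consequently a point of the slice is nilpotent if and only if all its coordinates on $V^{can}$ vanish. Since $h_i(0) \neq 0$ for every $i \in S$ and $S$ is non-empty, our leading coefficient is non-nilpotent, and we conclude that $\sigma$ has slope $(a-b)/b = r - 1 = s(\chi)$.

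The main obstacle is this non-nilpotency check: it cannot be done by formal manipulations with the connection alone and instead requires Kostant's structure theorem on the transversal slice. A subsidiary point is that one must invoke the invariance of slope under further gauge transformations and ramified base change in order to conclude that $(a-b)/b$ is genuinely the slope, not merely an upper bound; this is standard in the formal theory of $\hG$-connections.
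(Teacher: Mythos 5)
Your proof is correct and follows essentially the same route as the paper's: write the oper in Drinfeld--Sokolov canonical form, pass to a ramified cover, gauge by a power of $\check\rho$ to align the exponents, and invoke Kostant's theorem on the regularity of the slice $p_{-1}+V_\can$ to check that the leading polar coefficient is non-nilpotent, so that Lemma~\ref{order of singularity} pins down the pole order. The only cosmetic differences are that you ramify by the reduced denominator $b$ of $r=\sup_i n_i/(d_i+1)$ rather than by $d_k+1$ as in the paper, and that you dispose of the $r\le 1$ case by a gauge transformation over the base field rather than after passing to the cover; both variants are valid.
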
 

The following corollary is an immediate consequence. It appears to be a new result in the theory of meromorphic connections.\footnote{We note, however, that this result would follow from the Bremer-Sage Theory together with an unpublished result of Yu; see Remark \ref{r:Yu}. For smooth representations of $p$-adic groups, the analogous result 
is proved in \cite{Reeder-Yu}.}

\begin{cor} For every $\sigma \in \Loc_\hG(\cDt)$, the slope $s(\sigma)$ divides a fundamental degree of $\cfg$. 
\end{cor}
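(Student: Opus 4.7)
The plan is to combine Proposition \ref{p:slopeOper} with Definition \ref{slope via oper} and invoke the surjectivity of the forgetful map from opers to local systems. First, by the main theorem of \cite{FZ}, the map $p\colon \Op_\hG(\cDt)\to \Loc_\hG(\cDt)$ is surjective, so for any $\sigma\in \Loc_\hG(\cDt)$ we can pick an oper $\chi\in p^{-1}(\sigma)$. By Proposition \ref{p:slopeOper}, we have $s(\sigma)=s(\chi)$, so it suffices to analyse the rational number $s(\chi)$.

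Next, I would write $\chi$ in the standard form $(v_1,\ldots,v_\ell)$ with $v_i=t^{-n_i}h_i$, $h_i\in\bCbb^\times$. If $s(\chi)=0$ there is nothing to prove, so assume the supremum in Definition \ref{slope via oper} is positive and attained at some index $i_0$. Then
$$s(\sigma)=s(\chi)=\frac{n_{i_0}}{d_{i_0}+1}-1=\frac{n_{i_0}-d_{i_0}-1}{d_{i_0}+1}.$$
This exhibits $s(\sigma)$ as an integer divided by $d_{i_0}+1$, which is a fundamental degree of $\cfg$ (the fundamental degrees are precisely $d_1+1,\ldots,d_\ell+1$). Consequently, if we write $s(\sigma)=a/b$ in lowest terms, the denominator $b$ divides $d_{i_0}+1$, which is the standard meaning of ``the slope divides a fundamental degree.''

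There is essentially no obstacle: once Proposition \ref{p:slopeOper} and Definition \ref{slope via oper} are in hand, the corollary is an immediate unwinding of definitions, using only that the fundamental degrees of $\cfg$ coincide with the shifted exponents $d_i+1$. All the real work sits upstream in Proposition \ref{p:slopeOper}, where one has to show that the combinatorial quantity extracted from an oper really agrees with the classical slope of the underlying meromorphic connection.
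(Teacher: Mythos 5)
Your proposal is correct and is essentially the paper's own argument: the authors call the corollary an ``immediate consequence'' of Proposition \ref{p:slopeOper}, and the intended deduction is precisely the one you give — pick an oper over $\sigma$ using \cite{FZ}, transfer the slope via Proposition \ref{p:slopeOper}, and read off from Definition \ref{slope via oper} that $s(\sigma)$ is a rational number whose denominator divides some $d_{i_0}+1$, i.e.\ a fundamental degree of $\cfg$. Your reading of ``the slope divides a fundamental degree'' as ``the denominator of the slope divides a fundamental degree'' is the intended one, as the authors state explicitly in Remark \ref{r:Yu} and in the overview of Section \ref{s:slope}.
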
 

We refer the reader to Section \ref{s:slope} for a thorough discussion of the slope.

We are now ready to state our main result. 

\begin{thm} \label{t:main}
For all $\chi\in \Op_\hG(\cDt)$, we have $s(\chi) \leq d(\hfg_c-\modd_\chi)$.  
\end{thm}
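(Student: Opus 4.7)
The natural strategy is to prove the contrapositive: I will show that for every $r \geq 0$ and every $x \in \cB(G)$, if the category $(\hfg_c-\modd_\chi)^{G_{x,r^+}}$ of $G_{x,r^+}$-strongly equivariant objects is nontrivial, then $s(\chi) \leq r$. Taking the infimum over such pairs $(r,x)$ then delivers $s(\chi) \leq d(\hfg_c-\modd_\chi)$, which is the desired theorem.

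Let $V$ be a nonzero object of $(\hfg_c-\modd_\chi)^{G_{x,r^+}}$ and fix a nonzero $G_{x,r^+}$-fixed vector $v \in V$. Strong equivariance forces the Moy-Prasad Lie subalgebra $\fg_{x,r^+} \subset \fgpp$ to annihilate $v$, so $v$ generates a quotient of the induced $\hfg_c$-module
\[
\bM_{x,r} := \Ind_{\fg_{x,r^+}\oplus \bC\bone}^{\hfg_c}(\bC),
\]
where $\bone$ is the central element acting at the critical level. Since $\cZ_c$ acts on this quotient by the character $\chi$, the character $\chi$ must lie in the support of $\bM_{x,r}$ as a $\cZ_c$-module. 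It therefore suffices to prove the key inclusion
\[
\mathrm{Supp}_{\cZ_c}(\bM_{x,r}) \subseteq \Op_\hG^r,
\]
for, by Definition \ref{slope via oper}, this is exactly the condition $s(\chi) \leq r$.

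To establish the key inclusion, I would exploit the Feigin-Frenkel isomorphism $\cZ_c \cong \Fun(\Op_\hG(\cDt))$ together with the presentation of $\cZ_c$ via Segal-Sugawara fields $S_i(z) = \sum_{n \in \bZ} S_i[n]\,z^{-n-d_i-1}$ for $i = 1, \ldots, \ell$. Under Feigin-Frenkel, the modes $S_i[n]$ correspond, up to a suitable PBW filtration on $\cZ_c$, to the Laurent coefficients of the $i$-th oper coordinate $v_i \in \bCpp$, and the condition $\chi \in \Op_\hG^r$ amounts to the vanishing of those coefficients beyond the pole-order bound $n_i \leq (r+1)(d_i+1)$ dictated by Definition \ref{slope via oper}. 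The remaining computation is then to show that for $n$ sufficiently large (the threshold depending on $r$ and $d_i$), the operator $S_i[n]$ lies in the left ideal $U(\hfg_c)\cdot \fg_{x,r^+}$, and so annihilates the cyclic generator of $\bM_{x,r}$.

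The principal technical obstacle is precisely this last filtration comparison: the Moy-Prasad filtration on $\fgpp$, which depends in a nontrivial way on the facet of $\cB(G)$ containing $x$, must be matched with the natural grading on $U(\hfg_c)$ induced by the Segal-Sugawara weights. A natural first step is to conjugate $x$ into the closure of a fundamental alcove, so that $\fg_{x,r^+}$ becomes explicit as a sum of affine root spaces graded by powers of $t$; the matching ratios $n_i/(d_i+1)$ should then emerge from degree-counting in the PBW description of Segal-Sugawara generators, using that the exponents of $\cfg$ appear as the degrees of these generators. It is here that Definition \ref{slope via oper}, rather than the classical Katz-Deligne definition, becomes essential, since the numerology of the bound on $S_i[n]$ directly reflects the oper formulation.
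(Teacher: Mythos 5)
Your reduction mirrors the paper's exactly: everything hinges on showing that the central support of $\bU_{x,r}=\Ind_{\fg_{x,r^+}\oplus\bC}^{\hfg_c}(\bC)$ lies in $\Op_\hG^r$, or equivalently that the Segal--Sugawara modes $S_{i,[n]}$ for $n\geq(d_i+1)(r+1)$ annihilate the cyclic generator. Two remarks on what is left out.

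A small point first: producing a nonzero $G_{x,r^+}$-fixed vector inside a nonzero strongly equivariant object is not automatic from the definition. Strong equivariance gives $G_{x,r^+}$-integrability, and the existence of an honest fixed vector is then extracted by Kolchin's theorem applied to the finite-dimensional unipotent quotients $G_{x,r^+}/G_{x,s}$ for $s>r$; without this step your generating vector $v$ need not exist.

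The substantive gap is in the degree count you defer to the end. Write $S_i$ as a linear combination of PBW monomials $x_{n_1}^{\alpha_1}\cdots x_{n_k}^{\alpha_k}$ in root vectors. The generator $v\in\bU_{x,r}$ is annihilated by $x_n^\alpha$ precisely when $n\geq 1-\lceil\alpha(x)-r\rceil$, a threshold that depends on $\alpha(x)$ and hence on where $x$ sits in the apartment. A direct estimate of the annihilation threshold for $S_{i,[n]}$ therefore appears to be $x$-dependent and would not yield the clean bound $(d_i+1)(r+1)$. The observation that closes the argument is that Segal--Sugawara vectors are $\ft$-invariant (they are killed by the constant Cartan), so every monomial in their expansion has $\sum_j\alpha_j=0$; combined with the elementary estimate $-\lceil\alpha_j(x)-r\rceil\leq-\alpha_j(x)+r$, the $\alpha_j(x)$ contributions cancel and one is left with $\sum_j(-\lceil\alpha_j(x)-r\rceil)\leq kr$. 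The remaining inputs are that $S_i$ is homogeneous of degree $d_i+1$, hence each monomial has at most $d_i+1$ factors and total $t$-degree $-(d_i+1)$, together with a lemma controlling the Fourier coefficients of a normally ordered product in terms of products of coefficients of its factors. Your plan of conjugating $x$ into the fundamental alcove and ``degree-counting using the exponents'' does not by itself recover this: without the weight-zero cancellation the numerology does not close, so this is a missing idea rather than a routine verification.
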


\ssec{Idea of the proof}
Let us briefly explain the main ingredient in the proof of this theorem. First of all, one can show (cf. \S 10 \cite{FrenkelBook}) that in the present situation, the categorical depth of $\hfg_c-\modd_\chi$ can be alternatively defined by 
\[ 
d(\hfg_c-\modd_\chi)=
\inf \{ r\in \bR_{\geq 0}\, | \, \exists \, \textrm{$x\in \cB(G)$ such that $\hfg_c-\modd_\chi$ contains a $G_{x,r^+}$-integrable module}.\}
\]

Suppose $d(\hfg_c-\modd_\chi)\leq r$. Then $\hfg_c-\modd_\chi$ contains a $G_{x,r^+}$-integrable module $W$. To show that $s(\chi)\leq r$, it is enough to show that $W$ is centrally supported on the subscheme $\Op_\hG^{r}$. It follows from Kolchin's theorem (cf. Remark \ref{r:intDepth}) that we have a canonical non-zero morphism of $\bU_{x,r}\ra W$, where 
\begin{equation}
\bU_{x,r}:=\Ind_{\fg_{x,r^+}\oplus \bC}^{\hfg_c}(\bC). 
\end{equation} 
Thus, Theorem \ref{t:main} follows from the following: 

\begin{thm} \label{t:support} 
The natural morphism $\bC[\Op_\hG(\cDt)]\simeq \cZ_c \ra \End_{\hfg_c}(\bU_{x,r})$ factors through the quotient $\bC[\Op_\hG(\cDt)]\ra \bC[\Op_\hG^r]$. 
\end{thm}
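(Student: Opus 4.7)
Since $\bU_{x,r}$ is cyclic with generator $\bone_{x,r}$, and $\cZ_c$ lies in the centre of the completed universal enveloping algebra, the map $\cZ_c\to\End_{\hfg_c}(\bU_{x,r})$ is determined by $z\mapsto z\cdot\bone_{x,r}$. Thus it suffices to show that the defining ideal $I_r\subset\cZ_c=\bC[\Op_\hG(\cDt)]$ of $\Op_\hG^r$ annihilates $\bone_{x,r}$. By Definition \ref{slope via oper}, expressing an oper in canonical form as a tuple $(v_1,\dots,v_\ell)\in(\bCpp)^\ell$, the ideal $I_r$ is generated by the Laurent coefficients $v_i^{(m)}$ with $m<-(r+1)(d_i+1)$. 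Under the Feigin-Frenkel isomorphism $\cZ_c\simeq\bC[\Op_\hG(\cDt)]$, these coefficients are matched with the modes of the Segal-Sugawara currents $S_i(z)=\sum_n S_i[n]\,z^{-n-d_i-1}$, with $v_i^{(m)}$ pairing to $S_i[-m-d_i-1]$. Hence $I_r$ is generated by $\{S_i[n]:1\le i\le\ell,\ n>r(d_i+1)\}$, up to an integer-rounding adjustment when $r(d_i+1)\notin\bZ$.

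It remains to show $S_i[n]\cdot\bone_{x,r}=0$, equivalently that $S_i[n]$ lies in the left ideal of the completed universal enveloping algebra generated by $\fg_{x,r^+}$, for all such $(i,n)$. I would first treat the case $x=x_0$, the origin vertex of the standard apartment, where $\fg_{x_0,r^+}$ is spanned by $J^a\otimes t^k$ with $k>r$. The Segal-Sugawara vector $S_i\in V_c(\fg)$ has conformal weight $d_i+1$, so $S_i[n]$ is realized as a (topologically convergent) infinite sum of normally-ordered monomials $:J^{a_1}[k_1]\cdots J^{a_{d_i+1}}[k_{d_i+1}]:$ with $\sum_j k_j=n$. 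The constraint $n>r(d_i+1)$ forces, by pigeonhole, at least one $k_{j_0}>r$, placing that factor in $\fg_{x_0,r^+}$. Moving it to the rightmost position via the affine Kac-Moody commutation relations kills $\bone_{x_0,r}$; the central-extension term is harmless (it shortens the monomial and multiplies by the critical central charge), while each commutator correction is a shorter monomial of the same total $L_0$-weight $n$. A descending induction on monomial length, with base case $p=1$ (where $J^{a_1}[n]\in\fg_{x_0,r^+}$ because $n>r(d_i+1)\ge r$), yields the vanishing.

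For a general rational point $x=\lambda^\vee/e$ of the building, I would transport the origin argument via pullback along the ramified cover $u^e=t$ followed by conjugation by $\lambda^\vee(u^{-1})$. This identifies the Moy-Prasad filtration at $x$ with the standard depth filtration at the origin in the $u$-disk (rescaled by $e$), and the $\Aut\cO$-equivariance of the Feigin-Frenkel isomorphism ensures that the Segal-Sugawara operators and the slope filtration on opers transform compatibly under this change of variable. The pigeonhole argument of the previous paragraph, applied in the twisted coordinates, then yields the conclusion.

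The main obstacle is the compatibility required in the last step: one must verify that the slope stratification on $\Op_\hG(\cDt)$ matches the Moy-Prasad stratification after the cocharacter twist, tracking how the arithmetic of $(r+1)(d_i+1)$ interacts with $\lambda^\vee$ and the ramification index $e$. A secondary complication is the integer rounding when $r(d_i+1)\notin\bZ$, handled by observing that $I_r=I_{r'}$ for a slightly smaller rational $r'$ chosen so that each $r'(d_i+1)$ is an integer.
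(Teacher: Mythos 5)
Your reduction to the Segal--Sugawara statement is correct, and your treatment of the origin point $x_0$ (pigeonhole on the modes $k_j$, then pushing the large mode to the right and inducting on monomial length, with the central-extension term harmless) is essentially Lemma~\ref{l:annihilate} and Corollary~\ref{l:field} of the paper specialized to $x=x_0$; that recovers the Beilinson--Drinfeld case.

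For general $x$, however, you and the paper part ways, and your route has a real gap. The paper does \emph{not} reduce to $x_0$ via a ramified cover and cocharacter conjugation. Instead it runs the same pigeonhole-plus-commutator argument directly at an arbitrary $x\in\cA$, using the explicit formula \eqref{eq:gxr+} for $\fg_{x,r^+}$ together with the crucial structural fact (Lemma~\ref{l:SSVector}(iii)) that every monomial in a Segal--Sugawara vector has total $\ft$-weight zero, $\sum_j\alpha_j=0$. That single observation, combined with $-\lceil z\rceil\leq -z$, yields
$-\sum_j\lceil\alpha_j(x)-r\rceil\leq kr$
uniformly in $x$, and since $k\leq d_i+1$ the bound $m\geq(d_i+1)(r+1)$ falls out with no change of coordinates at all. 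Your cover-and-twist strategy, by contrast, requires knowing precisely how the Moy--Prasad filtration at $x=\lambda^\vee/e$, the Segal--Sugawara operators, and the slope stratification on $\Op_\hG(\cDt)$ all transform under pullback along $u^e=t$ followed by conjugation by $\lambda^\vee(u^{-1})$. You flag this yourself as ``the main obstacle,'' and it is not a formality: the twist is by a group element (not by an element of $\Aut\cO$), so you cannot invoke $\Aut\cO$-equivariance of the Feigin--Frenkel isomorphism directly; the gauge correction term $(\partial_u g)g^{-1}$ and the $\omega^{\otimes(d_i+1)}$-twist on oper coordinates under the ramified cover would have to be tracked, and one must check that the inequality $n>r(d_i+1)$ translates into the right congruence condition in the $u$-disk. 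None of this is done, so the general-$x$ step is missing. The moral worth extracting from the paper's proof is that the $\ft$-weight-zero property of Segal--Sugawara vectors is the ingredient that makes the estimate independent of the location of $x$, rendering the reduction to $x_0$ unnecessary.

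One further minor point: the normally-ordered monomials appearing in $S_i[n]$ have \emph{at most} $d_i+1$ factors (Lemma~\ref{l:SSVector}(ii)), not exactly $d_i+1$; this does not affect your pigeonhole but should be stated correctly.
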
  

If $x$ is the hyperspecial vertex and $n$ is a non-negative integer, then $\fg_{x,n^+}=t^{n+1}\fgbb$. In this case, the above theorem is due to Beilinson and Drinfeld \cite[\S 3.8.7]{BD}. In view of the previous discussion, we can rephrase the theorem of Beilinson and Drinfeld as stating that 
\begin{equation} s(\chi)\leq \lceil d(\hfg_c-\modd_\chi) \rceil,
\end{equation}
 where $\lceil x\rceil$ denotes the smallest integer greater than or equal to $x$. Our main theorem, therefore, sharpens Beilinson and Drinfeld's theorem by removing $\lceil - \rceil$.

We prove Theorem \ref{t:support} by using basic properties of Segual-Sugawara vectors along with some general properties of Fourier coefficients of vertex fields.  We refer the reader to Section \ref{s:rep} for the details of the proof.

\ssec{Towards establishing the converse}\label{ss:global} 
Suppose $\chi$ is an oper with slope less than or equal to $r$.
How should one prove the inequality $d(\hfg_c-\modd_\chi)\leq r$? Suppose we can produce a module $\bV\in \hfg_c-\modd$ such that 
\begin{enumerate}
 \item[(i)] $\bV$ is $G_{x,r^+}$-integrable (thus, centrally supported on $\Op_\hG^r$); 
 \item[(ii)] The central reduction $\bV(\chi)$ is non-zero. 
 \end{enumerate} 
 Then $\bV(\chi)$ is a $G_{x,r^+}$-integrable object of $\hfg_c-\modd_\chi$, implying that $d(\hfg_c-\modd_\chi)\leq r$. 

As a motivating example, consider the module $\bV_n=\Ind_{t^{n}\fgbb\oplus \bC}^{\hfg_c}(\bC)$. According to Lemma 7.2.2 \cite{FG}, this module is \emph{free} over its central support $\Op_\hG^n=\Op_\hG^{\ord_n}$. Thus, for all $\chi\in \Op^{\ord_n}_\hG$, we have $\bV_n(\chi)\neq 0$ which implies that $d(\hfg_c-\modd_\chi)\leq n$. From this, it follows easily that
\begin{equation} 
 d(\hfg_c-\modd_\chi)\leq \lceil s(\chi) \rceil .
\end{equation} 
Our challenge (Conjecture \ref{c:main}) is to sharpen the above inequality by removing the $\lceil-\rceil$. 

The aforementioned freeness result was first pointed out by Drinfeld, who deduced it from the flatness of the Hitchin's map. Subsequently, Eisenbud and Frenkel gave a purely local proof using results of Mustata on singularities of jet schemes \cite[Appendix A]{EFM}. At the moment, we do not know how to extend this purely local approach to a more general setting. In the appendix, we sketch how one can potentially use the Hitchin's fibration for \emph{twisted groups} for constructing the desired module $\bV$.


\ssec{Acknowledgement} 
We would like to thank C. Bremer, A. Molev, D. Sage, Z. Yun and X. Zhu for helpful conversations.
The first author learned the definition of slope via opers, which is crucial in this paper, from X. Zhu. He is happy to thank him.
The second author was supported by the Australian Research Council Discovery Early Career Research Award.



\section{Slope of meromorphic connections} \label{s:slope}
\ssec{Overview} 
Let $G$ be a connected reductive group over the complex numbers. The notion of slope for $G$-local systems on $\cDt$ has a long and complicated history. In \S 11.9 of \cite{Katz70}, Katz explains what irregular connections on \emph{vector bundles} over $\cDt$ are, building on earlier works of Fuchs, Turrittin and Lutz. In particular, he explains  how to attach a canonical rational number to every irregular connection. For this reason, the slope is sometimes known as the \emph{Katz invariant}. 
The same concept also appears in  \cite{Deligne70}, Section II, \S 1. 

One of the characterisations of the slope of vector bundles (the one involving gauge transformation by elements in $G\llp t^{1/b} \rrp$) can be generalised, verbatim, to the case of connections on $G$-bundles.  It seems that this generalisation was first considered in \cite{Varadarajan}. We review the Katz-Deligne-Babbit-Varadajan definition of slope in \S \ref{ss:slopeDef} and give a short proof of the fact that it is well-defined by using opers. This proof, however, uses a nontrivial theorem of Frenkel and Zhu \cite{FZ}. 

As mentioned in the introduction, there is a deep analogy between Galois representation and flat connections. Guided by this analogy, Katz  \cite{Katz87} defined the \emph{differential Galois group}, by employing the Tannakian structure on the category of connections. 
It is clear from this formulation that the notion of slope of a flat vector bundle, defined in \emph{op. cit.} via filtration subgroups,  extends to flat $G$-bundles. We note, however, that the structure of the differential Galois group of $\cDt$ and its filtration subgroups are not easy to discern. We review the Tannakian definition of slope in \S \ref{ss:slopeTan}. 

In \cite{BS}, the authors define the slope of flat bundles using Moy-Prasad Theory. In more details, Bremer and Sage define what it means for a flat bundle $\sigma$ to contain a ``strata". They prove that the slope of $\sigma$ is the minimum of depths of a fundamental strata contained in it. In addition, they provide an algorithm for determining the slope and define a canonical form for connections over the base field (as oppose to going to a field extension of the form $\bC\llp t^{1/b} \rrp$.) 
Their approach makes clear the analogy between slopes of local systems and the depth of smooth representations (or categorical representations). We don't know, however, how to prove any relationship between depths of (categorical) representations and slopes of local systems using their definition.

We use the notion of oper to define the slope of flat $G$-bundles. One of the advantages of our definition is that it will be \emph{obvious} that the denominator of the slope of a $G$-bundle is a divisor of a fundamental degree of the Lie algebra of $G$.  Another advantage is that we can use this definition to make progress on Conjecture \ref{c:main}. The disadvantage is that one does not have an algorithm for putting a connection in its oper form. (The proof in \cite{FZ} is non-constructive.)

\ssec{First definition of slope} \label{ss:slopeDef} 
We start by recalling some basic definitions. Let $G$ be a connected reductive group over $\bC$. Let $\sigma \in \Loc_G(\cDt)$. By definition, 
$\sigma$ consists of a pair $(\cF,\nabla)$, where $\cF$ is a $G$-bundle on $\cD^\times$ and $\nabla$ is a meromorphic connection on $\cF$. One knows that every bundle on $\cDt$ is trivial. Choosing a trivialization for $\cF$, we can write the connection $\nabla$ as 
\begin{equation}\label{eq:op}
\nabla = \partial_t + A,\quad \quad A=A_{-n}t^{-n} + A_{-n+1}t^{-n+1} + \cdots, \quad \quad A_i\in \fg, \quad A_{-n}\neq 0. 
\end{equation} 
The integer $n$ and the element $A_{-n}$ are called the \emph{order of singularity} and the (top) \emph{polar part} of this trivialization, respectively.  
Changing the trivialization of $\cF$ by $g\in \Gpp$\footnote{Following a common abuse of notation, we are writing $\Gpp$ when we really mean $G(\bCpp)$.}  corresponds to a gauge transformation of the above expression 
\[
\nabla \mapsto g.\nabla :=\partial_t +gAg^{-1} - (\partial_t g)g^{-1}
\]
Thus, one can alternatively define $\sigma$ as a $\Gpp$-gauge equivalence class of operators of the form \eqref{eq:op}. 
After \cite{Katz70} and \cite{Deligne70}, one says that $\sigma=(\cF,\nabla)$ is \emph{regular} (resp. \emph{regular singular}) 
if in a particular trivialization of $\cF$, the order of singularity of the connection $\nabla$ is zero (resp. one). Otherwise, we say that $\sigma$ is \emph{irregular}.  

Note that the order of singularity of $\nabla$ is not invariant under gauge transformation: given an operator in the form \eqref{eq:op}, it may be possible to find a different trivialisation in which the order of singularity of $\sigma$ is less than $n$. The following lemma states that this cannot happen if $A_{-n}$ is \emph{nilpotent}.

\begin{lem}\label{order of singularity}
 Assume that the order of singularity of $\nabla$ is
$n\geq 2$ and the polar part of $\nabla$ is non-nilpotent. 
Then  every operator in the gauge equivalence class of $\nabla$ will have oder of singularity greater than or
equal to $n$. 
\end{lem}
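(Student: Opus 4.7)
My plan is to prove the lemma by reducing, via the Cartan decomposition and the invariance of pole order under $G[[t]]$-gauges, to the case of a single coweight gauge $g = t^\lambda$, and then analyzing the root-space decomposition of the polar part.

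For any $g \in G((t))$, use the Cartan decomposition $g = k_1 t^\lambda k_2$ with $k_1, k_2 \in G[[t]]$ and $\lambda$ a dominant coweight, relative to a fixed maximal torus $T \subset G$ with Lie algebra $\ft$. Gauge by $k_i \in G[[t]]$ preserves pole order and merely conjugates the polar part by $k_i(0) \in G$; in particular $k_2.\nabla$ has pole order $n$ with polar part $\mathrm{Ad}(k_2(0))A_{-n}$, still non-nilpotent. Thus it suffices to show: if $\nabla = \partial_t + A$ has pole order $n \geq 2$ with non-nilpotent polar part $A_{-n}$, and $\lambda$ is a dominant coweight, then $t^\lambda . \nabla$ has pole order $\geq n$.

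Compute
\[
t^\lambda . \nabla \;=\; \partial_t + \mathrm{Ad}(t^\lambda) A - \lambda/t.
\]
Decomposing $A = A^0 + \sum_{\alpha \in \Phi} A^\alpha$ with respect to $\ft$ (so that $A^0 \in \ft((t))$ and $A^\alpha \in \fg_\alpha((t))$) gives
\[
\mathrm{Ad}(t^\lambda) A \;=\; A^0 \;+\; \sum_\alpha t^{\langle \alpha, \lambda \rangle} A^\alpha,
\]
and since $n \geq 2$ the correction $-\lambda/t$ (pole $1$) does not affect pole orders $\geq 2$. Decompose the polar part $A_{-n} = A_{-n}^0 + \sum_\alpha A_{-n}^\alpha$. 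If $A_{-n}^0 \neq 0$, the $t^{-n}$ coefficient of $\mathrm{Ad}(t^\lambda) A$ contains $A_{-n}^0 \neq 0$, so the pole is $\geq n$. If $A_{-n}^0 = 0$, then $A_{-n}$ lies in $\bigoplus_{\alpha \neq 0}\fg_\alpha$; since $A_{-n}$ is non-nilpotent, it cannot lie entirely in the nilpotent subalgebra $\fn_+$ (whose elements are $\mathrm{ad}$-nilpotent on $\fg$), so there exists a negative root $\alpha$ with $A_{-n}^\alpha \neq 0$. For $\lambda$ dominant, $\langle \alpha, \lambda \rangle \leq 0$, and the term $t^{\langle \alpha, \lambda \rangle} A_{-n}^\alpha \cdot t^{-n}$ contributes to $\mathrm{Ad}(t^\lambda) A$ at pole order $n - \langle \alpha, \lambda \rangle \geq n$.

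In either case the pole of $\mathrm{Ad}(t^\lambda) A$, and hence of $t^\lambda . \nabla$, is at least $n$, which proves the lemma. I do not anticipate a major obstacle; the entire argument is elementary once one reduces to a gauge of the form $t^\lambda$, and the key input is the dichotomy above, which rests on the elementary fact that a non-nilpotent element of $\fg$ cannot lie entirely in a maximal nilpotent subalgebra.
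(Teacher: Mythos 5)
Your proof is correct and takes essentially the same route as the paper's: reduce via the Cartan decomposition $G((t)) = \bigsqcup_{\lambda} G[[t]]\, t^{\lambda}\, G[[t]]$ to the diagonal gauge $t^{\lambda}$, then split on whether the Cartan component of the polar part vanishes, using non-nilpotence to force a negative-root component when it does. The one small point you handle more explicitly than the paper is noting that the extra term $-\lambda/t$ has pole order $1 < n$ and therefore cannot interfere, which is where the hypothesis $n \geq 2$ enters.
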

\begin{proof}
Recall the Cartan decomposition  
$\displaystyle \Gpp=\bigsqcup_{\lambda\in\mathbb X_\bullet^+} \Gbb t^{\lambda}\Gbb$. Note that gauge transfermation by  elements of $\Gbb$ will not change the order of singularity and the non-nilpotentnece of the polar part; therefore, we are reduced to showing that the order of singularity of $t^\lambda\cdot\nabla$ is $\geq n$.
 Let $A_{-n}=A_{\bar\fn}\oplus A_\ft\oplus A_\fn$
be the decomposition induced by the triangular decomposition $\fg=\bar\fn\oplus\ft\oplus\fn$. If $A_\ft\neq 0$, then 
the operator $t^\lambda\cdot\nabla$ will contain the summand $A_\ft/t^n$ hence the order 
of singularity of $t^\lambda\cdot\nabla$ is at least $n$. If $A_\ft=0$, then since $A_{-n}$ is non-nilpotent it 
implies $A_{\bar\fn}\neq 0$. Let $A_{\bar\fn}=\oplus_{\alpha\in\Delta^+} A_{\bar\fn,-\alpha}$, then
any nonzero $A_{\bar\fn,-\alpha}$ will contribute a summand $A_{\bar\fn,-\alpha}t^{-n-<\lambda,\alpha>}$
in $t^\lambda\cdot\nabla$. Hence, the oder of singularity of $t^\lambda\cdot\nabla$ is $\geq n$.
\end{proof} 

The above lemma motivates the following definition. 
\begin{defe} \label{d:reduced}
The operator \eqref{eq:op} is in the \emph{reduced form} if $A_{-n}$ is not nilpotent.
\end{defe} 

It is not always possible to put a connection in a reduced form using $\Gpp$-gauge transformation. This is possible, however, if we allow ourselves to go to the field extension $\bC\llp t^{1/b}\rrp$ for some positive integer $b$. This is the content of the following lemma, which was originally proved in \cite{Deligne70} and \cite{Katz87} for $G=\mathrm{GL}_n$ and \cite{Varadarajan} for general $G$. We give a proof in \S\ref{ss:slopeOp}, using opers.

\begin{lem} Let $\sigma$ be a irregular local system on $\cDt$. Then there exists a positive integer $b$ such that the $G\llp t^{1/b} \rrp$-gauge equivalence class of $\sigma$ contains an operator in the reduced form. 
\end{lem}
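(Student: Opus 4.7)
The plan is to exhibit the reduced representative explicitly by lifting $\sigma$ to an oper, pulling back along a $b$-fold cover of the disk, and rebalancing with a gauge transformation of the form $u^{c\rho^\vee}$.

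First I would invoke the Frenkel--Zhu surjectivity theorem \cite{FZ} to pick an oper $\chi \in p^{-1}(\sigma)$ represented in canonical form
$$\chi \;=\; \partial_t + p_{-1} + \sum_{i=1}^\ell v_i(t)\,p_i, \qquad v_i(t) = t^{-n_i} h_i(t),\ h_i(0)\neq 0.$$
By Proposition \ref{p:slopeOper} and the irregularity of $\sigma$, the slope $s := s(\sigma) = s(\chi) = \max_i\{n_i/(d_i+1)-1\}$ is strictly positive. I would then choose a positive integer $b$ so that $c := b(s+1) \in \bZ_{>0}$ and $c\rho^\vee$ is a cocharacter of $\hG$, pull back along $t = u^b$ to obtain
$$\tilde\chi \;=\; \partial_u + b\,u^{b-1} p_{-1} + \sum_i b\,u^{b-1-bn_i}\,h_i(u^b)\,p_i,$$
and conjugate by $g = u^{c\rho^\vee}$.

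Using that $p_{-1}$ has $\rho^\vee$-weight $-1$ and $p_i$ has $\rho^\vee$-weight $d_i$, together with $-(\partial_u g)g^{-1} = -c\rho^\vee/u$, a direct calculation gives
$$g\cdot\tilde\chi \;=\; \partial_u + b\,u^{-(bs+1)} p_{-1} + \sum_i b\,u^{-(bs+1) + b((s+1)(d_i+1) - n_i)}\,h_i(u^b)\,p_i \;-\; \frac{c\rho^\vee}{u}.$$
The exponent $b((s+1)(d_i+1) - n_i)$ attached to the $p_i$-term is non-negative by the very definition of $s$ and vanishes precisely for those $i^*$ achieving the maximum. Since $s > 0$, the $\rho^\vee/u$ summand has strictly smaller pole order. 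Hence $g\cdot\tilde\chi$ has pole order exactly $bs+1$ with top polar part
$$b\Big(p_{-1} + \sum_{i^*:\,n_{i^*}/(d_{i^*}+1) = s+1} h_{i^*}(0)\, p_{i^*}\Big).$$

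Finally, Kostant's transverse-slice theorem identifies $p_{-1} + \bigoplus_i \bC\,p_i$ as a slice meeting every regular adjoint orbit of $\cfg$ in a single point; such an element is nilpotent if and only if all of its coordinates vanish, since the coordinates compute the fundamental invariants up to non-zero scalars. At least one $h_{i^*}(0)$ is non-zero, so the top polar part is non-nilpotent and $g\cdot\tilde\chi$ furnishes the required reduced representative inside the $\hG\llp t^{1/b}\rrp$-gauge class. The only non-routine step will be the lattice bookkeeping that makes $c\rho^\vee$ a genuine cocharacter of $\hG$; this costs multiplying $b$ by at most the index of the coroot lattice in the coweight lattice of $\hG$, which is harmless since the lemma only asserts existence of some $b$.
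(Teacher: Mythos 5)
Your proposal is correct and follows essentially the same route the paper takes: invoke Frenkel--Zhu to place an oper $\chi$ above $\sigma$, put $\chi$ in canonical form, pull back along a ramified cover, conjugate by a suitable power of $\rho^\vee$, and appeal to Kostant's theorem to see that the resulting top polar part is non-nilpotent. The paper deduces the Lemma as a corollary of the explicit computation inside the proof of Proposition~\ref{p:slopeOper}, where it takes $b = d_k+1$ (with $k$ the index achieving the maximum of $n_i/(d_i+1)$) and conjugates by $s^{n_k\check\rho}$; this is exactly your $c = b(s+1)$. One small remark in your favor: your final observation about lattice bookkeeping is not just cosmetic. The paper silently conjugates by $s^{n_k\check\rho}$, which is a legitimate element of $T\llp s\rrp$ only when $\check\rho$ lies in the cocharacter lattice (automatic if $G$ is adjoint, as in the paper's Introduction, but not for a general reductive $G$ as in \S\ref{s:slope}); in general one replaces $b$ by a small multiple so that $c\check\rho$ becomes integral, as you say. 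So your write-up fills a minor gap that the paper glosses over.
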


Next, observe that if we set $u=t^{1/b}$, then we can write the operator \eqref{eq:op} as 
\[
\nabla=A_{-n} u^{-nb} + \cdots 
\]
This motivates the  definition of slope:

\begin{defe}[cf. \cite{FGross}] The slope $s(\sigma)$ of a local system $\sigma$ is defined as follows: $s(\sigma)=0$ if $\sigma$ is regular singular; otherwise, 
 $s(\sigma)=a/b$ if the connection $\sigma$ is $G \llp t^{1/b} \rrp$-gauge equivalent to a reduced operator with order of singularity $a+1$. 
\end{defe}

\begin{lem}\label{l:Deligne}
The definition of slope is well defined.
\end{lem}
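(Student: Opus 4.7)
The case when $\sigma$ is regular singular is immediate, since the slope is declared to be zero without reference to any choice. The substance of the lemma is therefore to show that, in the irregular case, the ratio $a/b$ does not depend on the chosen extension or on the reduced representative picked within that extension. The plan is to reduce both reduced forms to a single common ramified extension and then invoke Lemma \ref{order of singularity} there.

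Concretely, suppose $\sigma$ admits reduced representatives $\partial_u + B u^{-a-1} + \cdots$ in $\bC\llp t^{1/b}\rrp$ and $\partial_{u'} + B' (u')^{-a'-1} + \cdots$ in $\bC\llp t^{1/b'}\rrp$. Set $\ell = \mathrm{lcm}(b,b')$, let $v = t^{1/\ell}$, and put $m = \ell/b$, $m' = \ell/b'$, so that $u = v^m$ and $u' = v^{m'}$. I would pull each reduced form back to $\bC\llp v \rrp$ using the change-of-variable identity $\partial_u = (m v^{m-1})^{-1}\partial_v$. After multiplying through by $m v^{m-1}$ (which amounts to rescaling the trivialising vector field), the first reduced form becomes $\partial_v + m B v^{-ma-1} + \cdots$, a reduced operator of order of singularity $ma + 1$; an analogous computation produces $\partial_v + m' B' v^{-m'a'-1} + \cdots$, of order of singularity $m'a' + 1$, from the second.

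The crucial observation is that this same rescaling intertwines $G\llp t^{1/b}\rrp$-gauge transformations with $G\llp v\rrp$-gauge transformations: a direct check of the cocycle formula $g.(\partial + A) = \partial + gAg^{-1} - (\partial g)g^{-1}$ shows that gauge equivalence of the old representatives translates into gauge equivalence of the new ones. Consequently, both images lie in a single $G\llp v\rrp$-gauge orbit, and, granting that irregularity is preserved under ramified extensions, both orders $ma + 1$ and $m'a' + 1$ are at least two. Lemma \ref{order of singularity}, applied in $\bC\llp v\rrp$, then forces $ma + 1 = m'a' + 1$, and dividing by $\ell$ yields $a/b = a'/b'$. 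The step I expect to be most delicate is the verification of the compatibility of gauge transformations with the $m v^{m-1}$-rescaling, together with the verification that $\sigma$ remains irregular over $\bC\llp v\rrp$; the latter is needed to rule out the pathological case of an order-one reduced form appearing over the extension, which would put Lemma \ref{order of singularity} out of reach.
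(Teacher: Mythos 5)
Your proof is correct and follows essentially the same route as the paper: pass both reduced forms to a common ramified extension (you use $\mathrm{lcm}(b,b')$, the paper uses $bb'$, an immaterial difference), compute how the pole orders scale under pullback, and then invoke Lemma \ref{order of singularity} to force the two scaled orders to coincide. The extra verifications you flag as "delicate" (compatibility of gauge orbits with pullback, and that irregularity persists so the pole orders stay $\geq 2$) are indeed used silently in the paper's version; your rendition is a bit more careful but the argument is the same.
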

\begin{proof}
We can assume $\sigma$ is irregular.
Let $l$ be another positive integer such that $\nabla$ is $G\llp t^{1/l}\rrp$-gauge equivalent to
a reduced operator with a pole of order $k+1$.
We have to show $a/b=k/l$. 
Passing to the extension $\bC(t^{1/bl})$, we see that the connection $\nabla$ is 
$G\llp t^{1/bl}\rrp$-gauge equivalent to both $\nabla_1$ and $\nabla_2$, where 
$\nabla_1$ (resp. $\nabla_2$) is a reduced operator having a pole of order $la+1$ (resp. $bk+1$). 
We claim that 
\[
la+1=bk+1.
\]
 Clearly, this implies  $a/b=k/l$ hence finished the proof the lemma.
Now observe that the operators $\nabla_1$ and $\nabla_2$ are $G\llp t^{1/bl}\rrp$-gauge 
equivalent. Thus, the claim follows from lemma \ref{order of singularity}. 
\end{proof}

\begin{rem}\label{r:vb} Let $\sigma=(\cF,\nabla)$ be a local system on a bundle $\cF$. 
Let $\rho:G\ra \mathrm{GL}(V)$ be a faithful representation of $G$, and let $\sigma^V$ be the induced 
connection on the associated vector bundle. Since being reduced is preserved under $\rho$, we see that 
the slope of $\sigma$ is equal to the slope of the connection $\sigma^V$.
\end{rem}


\ssec{Tannakian formulation} \label{ss:slopeTan}
Let us first recall the definition of differential Galois group of $\cDt$
following \cite[\S 2]{Katz87}. Let $\on{Conn}(\cD^\times)$ be the 
category of connections on $\cD^\times$. By definition, the objects of this category are pairs $(V,\nabla)$ consisting of a finite dimensional vector space $V$ over $\bCpp$ and a connection $\nabla$ on $V$. Note that if $\dim(V)=n$, then $(V,\nabla)$ is an element of $\Loc_{\mathrm{GL_n}(V)}(\cDt)$. The category $\on{Conn}(\cDt)$ has a natural notion of internal Homs and tensor products, giving it the structure of a rigid abelian tensor category with $\End_\bone=\bC$. It has, moreover, an evident $\bCpp$-valued fibre functor; namely, the functor which sends the pair $(V,\nabla)$ to $V$. 

Using the results of Levelt, Katz constructed a canonical $\bC$-valued fibre functor  $F: \on{Conn}(\cDt)\ra \mathrm{Vect}_\bC$. Thus, he showed that $\on{Conn}(\cDt)$ is, in fact, a \emph{neutral} Tannakian category over $\bC$. The differential 
Galois group $I=\on{Aut}(F)$ is the group of automorphisms of this fibre functor. 
It is a pro-algebraic group over $\bC$
whose finite dimensional representations are identified with objects of $\on{Conn}(D^\times)$.

The group $I$ has an ``upper numbering filtration" defined as follows. 
For every nonnegative real number 
$r$, let $I^r$ to be the kernel of 
$I\ra\on{Aut}(F|_{\on{Conn}_<r}(D^\times))$, 
where $\on{Conn}_{<r}(D^\times)$ 
is the subcategory of connections of slope
less than $r$. Similarly, one defines
$I^{r^+}$ to be the kernel of 
$I\ra\on{Aut}(F|_{\on{Conn}_{\leq r}}(D^\times))$.
For every $0<x<y$ we have 
\[
I^y\subset I^{x^+}\subset I^x\subset I.
\]

We claim that the data of a $G$-local system on $\cDt$ is the same as a homomorphism $I\ra G$. 
Indeed, suppose we are given $(E,\nabla)\in\on{Loc}_G(D^\times)$.
Then the  induction functor $E\ra E\times^GV$
defines a tensor functor
\[
\nabla:\on{Rep}(G)\ra\on{Conn}(D^\times).
\]
Composing with Katz's fibre functor, we obtain a tensor functor $\on{Rep}(G)\ra  \on{Vect}_\bC$, which by Tannaka duality, gives us a homomorphism
$\phi_\nabla:I\ra G$. The converse construction is also evident.

We now turn our attention to defining the slope of a local system using this formalism. 

\begin{defe}
Let $\sigma=(E,\nabla)\in\Loc_G(\cD^\times)$.
Define
\[
s'(\sigma)=\on{inf}\{r\geq 0\, \, |\, \,  I^{r^+}\subset
\on{ker}(\phi_\nabla)\}
\]
\end{defe}
\begin{lem} For all $\sigma \in \Loc_G(\cDt)$, 
we have $s'(\sigma)=s(\sigma)$.
\end{lem}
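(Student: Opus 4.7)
The plan is to reduce the statement to the case $G = \mathrm{GL}(V)$ via a faithful representation, and then unwind Katz's definition of the filtration $\{I^{r^+}\}$ recalled just above.

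\emph{Step 1: Reduction to $\mathrm{GL}_n$.} Fix a faithful representation $\rho: G \hookrightarrow \mathrm{GL}(V)$. Remark \ref{r:vb} already gives $s(\sigma) = s(\rho_*\sigma)$. On the Tannakian side, functoriality of Katz's fibre functor yields $\phi_{\rho_*\nabla} = \rho \circ \phi_\nabla$; since $\rho$ is a closed immersion of algebraic groups its kernel is trivial, so $\ker(\phi_\nabla) = \ker(\phi_{\rho_*\nabla})$, whence $s'(\sigma) = s'(\rho_*\sigma)$. We may therefore assume $\sigma = (V, \nabla)$ is a flat vector bundle.

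\emph{Step 2: The $\mathrm{GL}_n$ case, Tannakian side.} By the construction recalled in \cite[\S 2]{Katz87}, the subgroup $I^{r^+} \subset I$ is defined precisely so that the full subcategory $\on{Conn}_{\leq r}(\cDt)$ corresponds, under Tannaka duality with respect to Katz's fibre functor $F$, to those representations of $I$ that are trivial on $I^{r^+}$. Hence $I^{r^+} \subset \ker(\phi_\nabla)$ if and only if every Katz slope of $(V,\nabla)$ is $\leq r$, and so
\[
s'(V,\nabla) \;=\; \max\{\text{Katz slopes of } (V,\nabla)\}.
\]

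\emph{Step 3: Matching with the reduced-operator definition.} It remains to identify this maximum Katz slope with $s(V,\nabla)$. After passing to $\bC\llp u \rrp$ with $u^b = t$ for a suitable $b$, the Levelt--Turrittin decomposition writes $(V,\nabla)$ as a direct sum of sub-connections of single slopes $a_i/b$. In a reduced $\mathrm{GL}(V)\llp u\rrp$-gauge representative the top polar part is supported on the blocks of maximal slope $a/b := \max_i a_i/b$: being nonzero it has a nonzero eigenvalue, so it is non-nilpotent; and by Lemma \ref{order of singularity} no further gauge transformation can lower the pole order below $a+1$. Thus the reduced form has pole order exactly $a+1$, so $s(V,\nabla) = a/b$ agrees with the maximum slope, completing the proof.

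The main obstacle is Step 3: the Tannakian and gauge definitions both end up computing the \emph{maximum} of the slopes, but for very different reasons, and this must be checked carefully. The reduced-form definition considers only a single top polar part, whereas the Katz-slope definition a priori sees the whole slope decomposition; the reconciliation hinges on the fact that after the ramified cover the summands have constant slope, so that the block of largest slope is exactly what the reduced form records, with Lemma \ref{order of singularity} ensuring its pole order is a genuine gauge invariant.
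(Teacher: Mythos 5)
Your proposal is correct and follows the same two-step strategy as the paper: first reduce to the case $G=\mathrm{GL}(V)$ via a faithful representation (using Remark~\ref{r:vb} on the $s$ side and $\ker(\phi_\nabla)=\ker(\rho\circ\phi_\nabla)$ on the $s'$ side), then appeal to the definition of the upper numbering filtration. The only difference is that where the paper simply asserts that the vector-bundle case ``follows from the definition of the upper numbering filtration group,'' you spell out the reconciliation between the two notions of slope for vector bundles via the Levelt--Turrittin decomposition and Lemma~\ref{order of singularity}; this is a detail the paper treats as classical and leaves implicit.
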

\begin{proof}
Let $(r,V)$ be a faithful representation of $G$ and let $\sigma^V$  be the 
induced connection.
Then from the definition of $s'$ we see that 
$s'(\sigma)$ is equal to $s'(\sigma^V)$. By Remark \ref{r:vb}, we are reduced to the case of vector bundles and the lemma 
follows form the definition of the upper numbering filtration group. 
\end{proof}


\ssec{Recollections on Moy-Prasad Theory} 
Let $G$ be a connected reductive group over $\bC$, and let $Z$ denote the centre of $G$. Let $\fg$ denote the Lie algebra of $G$. We fix a maximal torus $T\subseteq G$ with the corresponding Cartan subalgebra $\ft$. Let $\Phi$ denote the set of roots of $G$ with respect to $T$. For ease of notation, we set 
\[
\Phi^*=\Phi \sqcup \{0\}
\]
 For $\alpha \in \Phi^*$ let $\fu_\alpha\subset \fg$ denote the weight space for $T$ corresponding to $\alpha$ (so $\fu_0=\ft$). Let $\Gpp$ and $\fgpp$ denote the corresponding loop group and loop algebra. 

Let $\ocB$ be the Bruhat-Tits building of $\Gpp$ and let $\cB=\ocB\times (X_*(Z)\otimes_\bZ \bR)$ denote the enlarged building. Let $\cA=\cA(G,T)$ denote the standard apartment of $\cB$. This is an affine space isomorphic to $X_*(T)\otimes_{\bZ} \bR$.  We have an isomorphism $\cA\simeq \ft_{\bR}$; thus, points in $\cA$ may be viewed as elements of $\ft$. 

For every $x\in \cB(G)$ and $r\in \bR_{\geq 0}$, Moy and Prasad defined subgroups $G_{x,r}$ and $G_{x,r^+}$ inside $\Gpp$. In the case $r=0$, then $G_x:=G_{x,0}$ is the usual parahoric subgroup associated to $x$ and $G_{x^+}:=G_{x,0^+}$ is the pro-unipotent radical of $G_x$. Recall that $\Gpp$ acts on $\cB(G)$. If $g.x=y$, for $g\in \Gpp$ and $x,y\in \Gpp$, then $\on{Ad}(g).G_{x,r}=G_{y,r}$. Dito for $G_{x,r^+}$ and its Lie algebra. Since, every $x\in \cB(G)$ has an element of $\cA$ in its orbit, in the applications we have in mind, it suffices to consider $x\in \cA$. 

It will be convenient for us to have an explicit description of the Lie algebras of $G_{x,r^+}$, where $x\in \cA$. This is given by 
\begin{equation}\label{eq:gxr+}
\fg_{x,r^+}=\bigoplus_{\alpha\in \Phi^*} \fu_\alpha(\cP^{1-\lceil \alpha(x)-r \rceil}).
\end{equation} 
Here, $\cP=t\bCbb$ denotes the maximal ideal of $\bCbb$. 

Let $\fg^*$ denote the dual of $\fg$. Moy and Prasad also defined filtration subalgebras of $\fg^*_{x,r}$ and $\fg^*_{x,-r}$, where now $r\in \bR_{\leq 0}$.  
One has a canonical isomorphism 
\begin{equation} 
(\fg_{x, r} / \fg_{x, r+})^* \simeq \fg^*_{x, -r} /
\fg^*_{x, -r+}.
\end{equation}

 Finally, let us make some remarks regarding optimal points. 
Fixed a chamber $C\subset\mathcal A$.
Let $O\subset\overline C$ denote the set of optimal points (see \cite{MP} for the definition 
of optimal points). The set $O$ has may good properties. For example,
the set $\big\{r\in\bR_{\geq 0}|G_{x,r}\neq G_{x,r^+}\big\}$ is a discrete 
subset of $\mathbb Q$. Elements in above set are called optimal numbers.
Also, for any $(y,r)\in\mathcal A\times\bR_{\geq 0}$, there are 
$x,z\in O$ such that 
\begin{equation}\label{optimal}
G_{x,r^+}\subset 
G_{y,r^+}\subset G_{z,r^+}.
\end{equation} 
In many applications of Bruhat-Tits and Moy-Prasad Theory, it is enough to consider the optimal points of the building, as oppose to arbitrary points.

\ssec{Slopes via Moy-Prasad Theory} 
We are now ready to give Bremer and Sage's definition of the slope \cite{BS}.  
 Let $(\cF, \nabla)$ be a pair consisting of a $G$-bundle $\cF$ on $\cDt$ equipped with a connection $\nabla$. 
 Choosing a trivialisation $\phi$ of $\cF$, we can write $\nabla$ in terms of a one-form with coefficients in $\fgpp$. We denote this one-form by 
$[\nabla]_\phi \in \Omega^1(\fgpp)$. Recall that a point $x\in \cA$ defines an element in $\ft$ which, by an abuse of notation, is also denoted by $x$. Therefore, $x\frac{d}{dt}$ is an element of $\Omega^1(\fg)\subset \Omega^1(\fgpp)$ and so $[\nabla]_\phi-x\frac{d}{dt}$ makes sense as an element of $\Omega^1(\fgpp)$. Now the residue pairing defines a canonical isomorphism 
\[
\Omega^1(\fgpp)\simeq \fg^*\llp t \rrp.
\]
 Thus, we may think of the one-form $[\nabla]_\phi-x\frac{d}{dt}$ as an element of $\fg^*\llp t\rrp$. Recall that $\fg^*_{x,-r}$ is a lattice inside $\fg^*\llp t \rrp$ for $r\in \bR_{\geq 0}$. 

A stratum is a triple $(x,r,\beta)$ consisting of a point $x\in \cA(G)$, a number $r\in \bR_{\geq 0}$, and a functional $\beta\in (\fg_{x,r}/\fg_{x,r^+})^*$. 

\begin{defe} We say that the flat $G$-bundle $(\cF,\nabla)$ contains the stratum $(x,r,\beta)$ with respect to the trivialisation $\phi$ if $[\nabla]_\phi-x\frac{d}{dt} \in \fg^*_{x,-r}$ and the coset $([\nabla]_\phi-x\frac{dt}{t})+ \fg^*_{x,-r^+}$ equals the coset determined by the functional 
\[
\beta\in (\bar{\fg}_{x,r})^*=(\fg_{x,r}/\fg_{x,r^+})^*\simeq \fg^*_{x,-r}/\fg^*_{x,-r^+}.
\]
\end{defe} 

It is proved in \cite{BS} that every $\sigma$ contains a stratum. In particular, the following definition make sense. 
\begin{equation}
s_{BS}(\sigma):= \min \{ r\in \bR_{\geq 0} \, \, |\, \,   \textrm{$\sigma$ contains a stratum of the form  $(x,r,\beta)$} \}
\end{equation}

Bremer and Sage establish the following result. 

\begin{thm} For every $\sigma\in \Loc_G(\cDt)$, we have $s_{BS}(\sigma)=s(\sigma)$. 
\end{thm}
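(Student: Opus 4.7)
The plan is to establish the two inequalities $s_{BS}(\sigma)\leq s(\sigma)$ and $s(\sigma)\leq s_{BS}(\sigma)$ separately, by setting up an explicit dictionary between reduced representatives of $\nabla$ on a ramified cover $u^b=t$ and fundamental strata at optimal points $x\in\cA$ of denominator $b$. The common thread is that an optimal point $x=\lambda/b$ with $\lambda\in X_\ast(T)$ becomes, after base change to $\bC\llp u\rrp$, an integer cocharacter $u^\lambda\in T\llp u\rrp$ whose adjoint action interpolates between the Moy--Prasad filtration $\fg_{x,\bullet}$ and the standard $u$-adic filtration on $\fg\llp u\rrp$.

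For $s_{BS}(\sigma)\leq s(\sigma)$, I would start from a reduced representative $\partial_u+B$ of $\nabla$ on the cover with $B=B_{-(a+1)}u^{-(a+1)}+\cdots$ and $B_{-(a+1)}\in\fg$ non-nilpotent, where $s(\sigma)=a/b$. Choose an optimal $x=\lambda/b$ adapted to this representative, and gauge-transform back to $\fgpp$ via $u^{-\lambda}$. A direct calculation with \eqref{eq:gxr+} and the residue pairing then identifies $[\nabla]_\phi-x\frac{dt}{t}$ as an element of $\fg^*_{x,-a/b}$ whose class in the quotient $\fg^*_{x,-a/b}/\fg^*_{x,-(a/b)^+}$ is a functional $\beta$; non-nilpotency of $B_{-(a+1)}$ translates into fundamentality of $\beta$, exhibiting a stratum of depth $a/b$ in $\sigma$.

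For the reverse inequality, I would start from a fundamental stratum $(x,r,\beta)$ contained in $\sigma$ with $r=s_{BS}(\sigma)$; by \eqref{optimal} we may assume $x$ is optimal, so $x=\lambda/b$ and $r=a/b$. Pulling $\nabla$ back through $u^b=t$ and gauging by $u^\lambda$ transports the lattice $\fg^*_{x,-r}$ to $u^{-(a+1)}\fg\llb u\rrb$, giving an operator $\partial_u+B$ with pole of order at most $a+1$, while the non-degeneracy of $\beta$ in the graded piece forces $B_{-(a+1)}$ to be non-nilpotent. This produces a reduced form of slope $a/b$, yielding $s(\sigma)\leq a/b = s_{BS}(\sigma)$.

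The main obstacle is the filtration bookkeeping at the heart of both steps: one must verify that conjugation by $u^\lambda$ intertwines the Moy--Prasad filtration $\fg_{x,\bullet}$ with the standard $u$-adic filtration on $\fg\llp u\rrp$ with the correct shift in indices, and that the ``fundamentality'' of the functional $\beta$ (a non-degeneracy condition in a quotient of Moy--Prasad lattices) is exactly equivalent to the ``reduced form'' condition (non-nilpotency of the top polar coefficient), with matching of pole order to depth. The case $G=\mathrm{GL}_n$, which reduces via Remark \ref{r:vb} to the classical Katz--Deligne theory of irregular connections on vector bundles, serves as a useful model for both pieces and can be leveraged by passing through a faithful representation.
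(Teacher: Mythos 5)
The paper does not prove this theorem; it is stated as a result of Bremer and Sage and cited directly to \cite{BS}, so there is no proof of the paper's own to compare against. Evaluating your proposal on its own terms: the dictionary you set up is the right one, and the index arithmetic does check out. For $x=\lambda/b$ with $\lambda\in X_*(T)$, the lattice $\fg_{x,r}$ contains $\fg^{\alpha}\otimes t^{n}$ iff $n+\alpha(x)\geq r$; after pulling back along $u^b=t$ and conjugating by $\lambda(u)$, this lands in $u$-degree $bn+\langle\alpha,\lambda\rangle = b(n+\alpha(x))\geq br$, so Moy--Prasad depth $r$ corresponds to $u$-adic pole depth $br$, consistent with the multiplicativity of slope under ramified base change.

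There are, however, two substantive gaps. The first is descent: $\lambda(u)$ lies in $G\llp u\rrp$ but not in $\Gpp$, so the reduced operator $\partial_u+B$ you produce over $\bC\llp u\rrp$ is not a trivialization $\phi$ of $\cF$ over $\bCpp$ and cannot be fed directly into the containment condition $[\nabla]_\phi-x\,dt/t\in\fg^*_{x,-r}$, which requires $\phi\in\Gpp$. You need a genuine Galois-descent step showing that this $\bCpp$-level condition is equivalent to a pole-order bound for the pull-back along $u^b=t$ in some $G\llp u\rrp$-gauge, using stability of the Moy--Prasad lattices under the Galois twist to approximate a $\bC\llp u\rrp$-trivialization by one defined over $\bCpp$. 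You label this ``filtration bookkeeping,'' but it is the real content and is not carried out. The second gap is that your argument for $s(\sigma)\leq s_{BS}(\sigma)$ tacitly replaces the minimum over all strata (which is how $s_{BS}$ is defined) by a minimum over fundamental strata; that the depth-minimizing stratum has non-nilpotent $\beta$ is the crux of the Bremer--Sage theorem and must be proved, not assumed. As it happens this assumption is also unnecessary for that direction: the pole-order bound $\leq a+1$ on $\partial_u+B$ already forces the slope of the pull-back to be $\leq a$, hence $s(\sigma)\leq a/b$, with no appeal to non-nilpotency; you should argue that way and reserve fundamentality (via Lemma \ref{order of singularity}) for the other inequality. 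Finally, the invocation of \eqref{optimal} to ``assume $x$ is optimal'' is not justified as stated: \eqref{optimal} compares Moy--Prasad subgroups at the same depth, but replacing $y$ by an optimal point also changes the shift $x\,dt/t$ appearing in the stratum condition, so strata at $y$ do not automatically transport to strata of the same depth at an optimal point; the existence of an optimal $x$ ``adapted'' to a given reduced form requires a separate argument.
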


\begin{rem}\label{r:Yu} Bremer and Sage prove that the slope may also be characterised as 
the minimum depth of a triple $(x,r, \beta)$ contained in $(\cF,\nabla)$ for which $x$ is an optimal point. Apparently, J. K. Yu has proved that the denominators of the critical numbers at the optimal points are divisors of the fundamental degrees of $\fg$ (unpublished). From these considerations, it follows that the denominator of the slope of a flat connection  divides a fundamental degree of $\fg$. This fact will be evident from our definition of slope via opers. 
\end{rem}

\ssec{Recollections on opers}\label{ss:opers}
Let $G$ be a reductive group of rank $\ell$. We fix a Borel subgroup $B$ and let $N=[B,B]$ and $T=B/N$, and let $W$ denote the Weyl group of $G$ and let $Z$ be the center of $G$.  Let $\fg$, $\fb$, $\fn$, and $\ft$ denote the corresponding Lie algebras. Choose generators $f_i$ for the root subgroups corresponding to the negative simple roots of $\fg$. Let 
\[
\displaystyle p_{-1}=\sum_{i=1}^{\ell} f_i.
\]

The notion of opers is due to Beilinson and Drinfeld \cite{BDOper}, building on earlier work by Drinfeld and Sokolov \cite{DS}. For us, the following description, given in terms of a local coordinate, suffices: a $G$-oper (on the punctured disk $\cDt$) 
is an $B\llp t\rrp$-gauge equivalence class of operators of the form 
\begin{equation} \label{eq:operDt}
\nabla =\partial_t + \sum\phi_if_i + v,\quad \quad \phi_i\in\bC\llp t\rrp^\times, \quad v \in \fb \llp t \rrp.
\end{equation} 
Let $\Op_G(\cDt)$ denote the set of $G$-opers on $\cDt$.
We will now give an explicit description of this set.

 Let $p_1$ denote the unique element of degree 1 in $\fn$ such that $\{p_{-1}, 2\rho, p_1\}$ is an $\Sl_2$-triple. Let 
\[
V_\can:=\bigoplus_{i\in E} V_{\can, i}
\]
denote the $\ad\, p_1$-invariants in $\cfn$, decomposed according to the principal gradation. Here $E=\{d_1, \cdots, d_\ell\}$ is the set of exponents of $\cfg$. As mentioned in \S 1.3 of \cite{FG}, it follows from a theorem of Kostant (cf. \cite{Drinfeld}) that the composition 
\begin{equation} \label{eq:Kostant} 
V_\can \rar {v\mapsto v+p_{-1}} \fg\ra \fg\quo G \simeq \ft\quo W
\end{equation}  
is an isomorphism. In particular, $V_\can$ is an affine space of dimension $\ell$.  The following lemma is due to Drinfeld and Sokolov \cite{DS}; see also Lemma 4.2.2 of \cite{FrenkelBook}.

\begin{lem}\label{I:DS} 
\begin{enumerate} 
\item[(i)] Every $B\llp t\rrp$-gauge equivalence class of 
operators \eqref{eq:operDt} contains an operator of the form
\begin{equation} \label{eq:canForm}
\nabla = \partial_t + p_{-1} + v,
\end{equation}
where $v\in V_\can\llp t\rrp$.
\item[(ii)] If $G$ is adjoint, the conjugation action of $B\llp t\rrp$ on the space of operators of the form \eqref{eq:operDt} is free and each conjugacy class contains a unique operator of the form in (\ref{eq:canForm}).
 \end{enumerate}
\end{lem}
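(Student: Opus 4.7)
The plan is to prove both parts using the principal grading on $\fg$ together with Kostant's theorem; intersecting the decomposition $\fg = \ker(\ad p_1) \oplus [\fg, p_{-1}]$ with $\fb$ yields the key identity $\fb = V_\can \oplus \ad(p_{-1})(\fn)$ as vector spaces.

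For (i), I would first use a $T\llp t\rrp$-gauge transformation to normalize the coefficients $\phi_i$ to $1$. Since $\on{Ad}(h) f_i = \alpha_i(h)^{-1} f_i$, it suffices to find $h \in T\llp t\rrp$ with $\alpha_i(h) = \phi_i$; taking $h = \prod_i \omega_i^\vee(\phi_i)$ via fundamental coweights works in the adjoint setting. After this step, the operator has the form $\partial_t + p_{-1} + v'$ for some $v' \in \fb\llp t\rrp$. Next, proceed by induction on the principal grade to successively modify $v'$ via $N\llp t\rrp$-gauge transformations. Assume the components of $v'$ in degrees $<k$ already lie in the respective $V_{\can, i}$. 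Decompose the grade-$k$ piece $v'_k$ uniquely by Kostant as $v'_k = w_k + [u_{k+1}, p_{-1}]$ with $w_k \in V_{\can, k}$ and $u_{k+1} \in \fg_{k+1}$. Applying the gauge transformation by $\exp(-u_{k+1})$ then replaces $v'_k$ by $w_k$, while the corrections from higher iterated commutators and from $\partial_t u_{k+1}$ all have principal grade $\geq k+1$ and do not disturb the grades $\leq k$. Since principal grades in $\fb$ are bounded by the Coxeter number minus one, the process terminates in finitely many steps.

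For (ii), the key is the following stronger statement: if $b \cdot (\partial_t + p_{-1} + v) = \partial_t + p_{-1} + v'$ with $v, v' \in V_\can\llp t\rrp$ and $b \in B\llp t\rrp$, then $b = 1$ (so $v = v'$). Both uniqueness and freeness of the $B\llp t\rrp$-action follow from this---uniqueness directly, and freeness on the full operator space by conjugating any stabilizer element through the gauge transformation that brings a general operator to canonical form via (i). To prove the statement, decompose $b = t_0 n_0$ uniquely with $t_0 \in T\llp t\rrp$ and $n_0 \in N\llp t\rrp$. Comparing the $\fg_{-1}$ components of the two operators forces $\on{Ad}(t_0) p_{-1} = p_{-1}$, i.e., $\alpha_i(t_0) = 1$ for every simple root; adjointness of $G$ then gives $t_0 = 1$. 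Writing $n_0 = \exp(u)$, induct on the lowest nonzero principal grade $k$ of $u$: at leading order the grade-$(k-1)$ component of the transformed operator differs from the original by $[u_k, p_{-1}] \in \ad(p_{-1})(\fn)$, yet $v'_{k-1} - v_{k-1}$ lies in $V_\can$. By the Kostant direct sum both must vanish, and injectivity of $\ad p_{-1}$ on $\fn$ forces $u_k = 0$, contradicting the minimality of $k$.

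The main obstacle I anticipate is bookkeeping the higher-order corrections in the $N\llp t\rrp$-gauge action: the transformation by $\exp(u)$ involves the full series $e^{\ad u}(p_{-1}+v)$ together with the Maurer--Cartan form $(\partial_t \exp u)(\exp u)^{-1}$. The observation that makes the argument clean is that for $u$ concentrated in principal grades $\geq k$, every iterated correction term raises the principal grade strictly above the component currently being modified, so the leading-order analysis at each grade is not disrupted by either earlier or later steps of the induction.
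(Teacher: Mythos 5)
Your proof is correct and is the standard Drinfeld--Sokolov argument that the paper cites (rather than proves in-text) from \cite{DS} and Lemma~4.2.2 of \cite{FrenkelBook}: normalize the $\phi_i$ by a $T\llp t\rrp$-gauge, then successively clear the $\ad(p_{-1})(\fn)$-components by $N\llp t\rrp$-gauge transformations working up the principal grading, using the graded Kostant decomposition $\fg_k = V_{\can,k}\oplus \ad(p_{-1})(\fg_{k+1})$; part~(ii) runs the same grade-by-grade comparison in reverse to force a putative stabilizer to be trivial. One small point worth noting: you correctly observe that the $T\llp t\rrp$-normalization step requires $G$ to be of adjoint type (so that the simple roots give an isomorphism $T\to\bGm^\ell$ and $\prod_i\omega_i^\vee(\phi_i)\in T\llp t\rrp$ makes sense), a hypothesis that the paper states only in part~(ii) but in fact uses throughout (it is in force globally, as the introduction fixes $G$ adjoint), so your explicit flagging of it is a slight improvement in precision rather than a divergence.
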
 

The expression \eqref{eq:canForm} is called the \emph{canonical form} of the oper. The above lemma implies that every oper has a canonical form. If 
$G$ is adjoint it has a unique canonical form and
we have an isomorphism $\Op_G(\cDt) \simeq V_\can \llp t\rrp$. In fact, this is an isomorphism of ind-schemes of ind-finite type over $\bC$. However, this isomorphism is not canonical since it depends on a choice of coordinate $t$.

\ssec{Slope via opers} \label{ss:slopeOp}

\sssec{} Let $\chi\in \Op_G(\cDt)$ and write $\chi$ in a canonical form $\partial_t+p_{-1}+v$, where $v\in V_\can\llp t\rrp$. Choose linear generators $p_j\in V_{\can, d_j}$ (if the multiplicity of $d_j$ is greater than one, then we choose linearly independent vectors in $V_{\can, j}$), and write
\[
 v(t) = \sum_{i=1}^\ell v_j p_j, \quad \quad v_j\in \bC\llp t \rrp. 
 \]
 Let us write $v_j=t^{-n_j}.h_i$ where $h_i\in \bCbb^\times$. 
In the introduction, we defined the slope of $\chi$ is defined by 
\[
s(\chi):= \sup \{ 0, \sup{n_i/(d_i+1)-1}_{i=1,\cdots, \ell} \}. 
\]

Recall that Proposition \ref{p:slopeOper} states that the slope of an oper equals the slope of its underlying connection. 

\begin{proof}[Proof of Proposition \ref{p:slopeOper}]
Let $\sigma=(\mathcal F,\nabla)$ be the underling connection of the oper $\chi$.
Let $\nabla=\partial_t+p_{-1}+v$ be a canonical form coming from the oper structure. 
Write $v_i=t^{-n_i}\cdot h_k$ where $h_k\in\bCbb^\times$. Assume that $n_k/(d_k+1)$ is maximal among all
$\{n_i/(d_i+1)\}_{i=1,...,r}$ . Now take the covering $s=t^{1/(d_k+1)}$, the connection becomes
\[\nabla=d+(p_{-1}+s^{-n_k(d_k+1)}h_kp_k+\oplus_{i\neq k}s^{-n_k(d_i+1)+c_i}h_ip_i)(d_k+1)s^{d_k}\]
where $c_i\in\bZ_{+}$. Conjugating with $s^{n_k\cdot\check\rho}$ we get 
\[\nabla=d+(d_k+1)s^{d_k-n_k}(p_{-1}+h_kp_k+\oplus_{i\neq k}s^{c_i}h_ip_i)+n_k\check\rho s^{-1}.\]
If $s(\chi)=0$, then we have $n_k-d_k\leq 1$, thus the connection has regular singularity and it 
implies $s(\sigma)=0$.
If $s(\chi)>0$, then we have $n_k-d_k>1$. Since $h_k=z_0+z_1s+\cdot\cdot\in\bC\llb s\rrb^\times$ and $c_i\in\bZ_+$, 
we see that 
the order of the singularity of the connection $\nabla$ is $n_k-d_k$ and the polar part is $p_{-1}+z_0p_k$ which is not nilpotent
(by the theorem of Kostant). Hence from the definition of slope in \S\ref{ss:slopeDef}, we have 
\[s(\sigma)=(n_k-d_k-1)/(d_k+1)=n_k/(d_k+1)-1=s(\chi).\] 
\end{proof}

\begin{rem} 
A flat $GL(n)$-bundle on $\mathcal D^\times$ is the same as a
rank $n$ bundle $E$ on $\mathcal D^\times$ with a flat connection $\nabla$. 
An oper structure on $(E,\nabla)$ is equivalent to the existence of cyclic vector
of $(E,\nabla)$. The existence of cyclic vector is proved in \cite{Deligne70}.
Furthermore, in \cite[p. 49]{Deligne70} Deligne give a definition of slope using the cyclic vector. One can show that his definiton is equivalent to 
our Definition \ref{slope via oper} in the case $G=GL(n)$.
\end{rem}

Recall the definition of reduced form of an operator (Definition \ref{d:reduced}). 
\begin{cor}
Every irregular flat $G$-bundle on $\mathcal D^\times$ is $G((t^{1/b}))$-gauge equivalent to 
a reduced operator for some $b\in\bZ_{>0}$.
\end{cor}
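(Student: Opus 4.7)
The plan is to piece the corollary together directly out of the ingredients already assembled: the Frenkel--Zhu surjectivity theorem, the canonical form of an oper, and the explicit computation performed in the proof of Proposition~\ref{p:slopeOper}.

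First, I would invoke the surjectivity of the forgetful map $p\colon \Op_G(\cDt) \to \Loc_G(\cDt)$ from \cite{FZ} to choose an oper $\chi$ with $p(\chi) = \sigma$. Writing $\chi$ in its canonical form from Lemma~\ref{I:DS},
\[
\chi = \partial_t + p_{-1} + \sum_{j=1}^\ell v_j p_j, \qquad v_j = t^{-n_j} h_j,\ h_j \in \bCbb^\times,
\]
the assumption that $\sigma$ is irregular together with Proposition~\ref{p:slopeOper} forces $s(\chi) = s(\sigma) > 0$, so the index $k$ maximizing $n_i/(d_i+1)$ satisfies $n_k - d_k > 1$. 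This sets up the integer $b = d_k + 1$ whose existence the corollary asserts.

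Second, I would repeat verbatim the computation already carried out in the proof of Proposition~\ref{p:slopeOper}: pass to $s = t^{1/(d_k+1)}$ and conjugate by $s^{n_k\check\rho}$ to bring the oper connection into the shape
\[
d + (d_k+1) s^{d_k - n_k}\Bigl(p_{-1} + h_k p_k + \sum_{i \ne k} s^{c_i} h_i p_i\Bigr) + n_k \check\rho\, s^{-1}.
\]
The order of singularity is $n_k - d_k \ge 2$, and the top polar part is proportional to $p_{-1} + z_0 p_k$, where $z_0 \in \bC^\times$ is the constant term of $h_k$. By Kostant's theorem, encoded in the isomorphism \eqref{eq:Kostant}, this element is regular semisimple in $\fg$ and in particular not nilpotent. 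So the displayed operator is reduced in the sense of Definition~\ref{d:reduced} over the extension $\bC\llp t^{1/b}\rrp$, proving the corollary.

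The only point requiring a little care is that Frenkel--Zhu's theorem is most cleanly stated for adjoint $G$, whereas the slope discussion in this section allows $G$ to be an arbitrary connected reductive group; I expect this to be the only real obstacle in the argument. This can be handled by choosing a faithful representation $G \hookrightarrow \mathrm{GL}(V)$ and invoking Remark~\ref{r:vb}, which says that passing to the associated vector bundle preserves both the slope and the property of being reduced, so that reducedness on $\bC\llp t^{1/b}\rrp$ can be transported between $\sigma$ and $\sigma^V$. Alternatively, one can first reduce to the simply connected cover or adjoint quotient of the derived group and check directly that the $B\llp t\rrp$-gauge class of an oper is unaffected, so that the oper computation above goes through without change.
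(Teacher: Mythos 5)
Your proposal follows exactly the paper's argument: apply the Frenkel--Zhu surjectivity theorem to equip the irregular flat $G$-bundle with an oper structure, and then observe that the computation already carried out in the proof of Proposition~\ref{p:slopeOper} (passing to $s = t^{1/(d_k+1)}$ and conjugating by $s^{n_k\check\rho}$) lands on a reduced operator. Two small remarks. First, the claim that $p_{-1}+z_0 p_k$ is \emph{regular semisimple} is more than Kostant's theorem gives you and more than you need; the isomorphism \eqref{eq:Kostant} only tells you that $p_{-1}+z_0 p_k$ has nonzero image in $\ft \quo W$ when $z_0\neq 0$, hence is not nilpotent, which is exactly what reducedness requires. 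Second, your proposed fix for non-adjoint $G$ via a faithful representation $G\hookrightarrow\mathrm{GL}(V)$ and Remark~\ref{r:vb} does not obviously help: it would let you transport slopes and reducedness from $\sigma$ to $\sigma^V$, but it does not produce an oper structure for the $G$-bundle from one for the $\mathrm{GL}(V)$-bundle. The paper simply cites \cite{FZ} without raising the issue; part~(i) of Lemma~\ref{I:DS} (existence of a canonical form) already holds for any reductive $G$, and that existence is all the argument uses.
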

\begin{proof}
Indeed, during the proof of the above proposition we showed that every oper
whose underling flat $G$-bundle is irregular is $G((t^{1/b}))$-gauge equivalent to 
a reduced operator. Since every flat $G$-bundle has an oper structure, result follows.
\end{proof}

\begin{exam}[cf. \cite{FGross}]
Let $G$ be a simple group of adjoint type.
Let us compute the slope of the operator 
\[
\nabla=\partial_t+\frac{p_{-1}}{t}+\frac{p_k}{t^2}.
\]
Recall that $p_{-1}=\sum_{i=1}^{l} f_i$ and $p_k\in V_{can,k}$.
Let us write $m=d_k+1$.
Passing to the extension $s=t^{1/m}$, the connection becomes 
\[\partial_t+\frac{mp_{-1}}{s}+\frac{mp_k}{s^{m+1}}.\]
Gauge transforming this operator with $g=\rho(s)$, where $\rho$ is the  half-sum pf
positive coroots, the connection becomes
\[
\partial_t+\frac{m(p_{-1}+p_{k})}{s^2}-\frac{\rho}{s}.
\] 
By a theorem of Kostant, the element $p_{-1}+p_k$ is non-nilpotent; hence, the slope is equal to $\frac{1}{m}$.
Alternatively, we compute the slope of $\nabla$ using the canonical form of oper. Conjugating by $t^{-\rho}$, the connection 
$\nabla$ becomes
\[
\partial_t+p_{-1}+\frac{\rho}{t}+\frac{p_k}{t^{d_k+2}}.
\]
To get rid of $\rho\cdot t^{-1}$ we conjugate by $\on{exp}(\frac{-p_1}{2t})$ and obtain the operator 
\[
\partial_t+p_{-1}-\frac{p_1}{4t^2}+\frac{p_k}{t^{d_k+2}}.
\]
This operator has the form of an oper. Now using definition \ref{slope via oper}, one can easily see that 
its slope (as an oper) equals $\frac{d_{k}+2}{d_{k}+1}-1=\frac{1}{m}$.
\end{exam}


\section{Representations of affine Kac-Moody algebras} \label{s:rep}
The main purpose of this section is to proof Theorem \ref{t:support'}. In the first subsection, we recall the definition of Moy-Prasad subalgebras and define the notion of depth for smooth modules for affine Kac-Moody algebras. The results of this subsection is not used in the rest of the paper. In \S \ref{ss:vertex}, we collect some basic information regarding vertex operators.  In \S \ref{ss:smooth}, we apply these general considerations to  affine vertex algebras. We recall some basic properties of Segal-Sugawara vectors and operators in \S \ref{ss:Sugawara}. This is useful because according to \cite[\S 3]{FrenkelBook}, Segal-Sugawara operators can be interpreted as elements in the centre of the completed enveloping algebra. Armed with these preliminaries, we prove Theorem \ref{t:support'} in \S \ref{ss:proof}.

\ssec{Depths of smooth modules} \label{ss:depth} Let $\fg$ be a simple Lie algebra over $\bC$.
Let $\kappa$ be an invariant bilinear form on $\fg$. The affine Kac-Moody algebra $\hfg_\kappa$ at level $\kappa$ is defined to be the central extension
\begin{equation}\label{eq:central}
0\ra \bC.\bone \ra \hfg_\kappa\ra \fgpp \ra 0,
\end{equation} 
with the two-cocycle defined by the formula 
\begin{equation}\label{eq:2cocycle}
(x\otimes f(t), y\otimes g(t)) \mapsto -\kappa(x,y).\,\Res_{t=0} fdg.
\end{equation}

A module $V$ over $\hfg_\kappa$ is \emph{smooth} if for every $v\in V$ there exits $N_v\geq 0$ such that $t^{N_v} \fgbb$ annihilates $v$, and such that $\bone\in \bC.\bone \subset \hfg_\kappa$ acts on $V$ as the identity.  Thus, every vector in a smooth module $V$ is annihilated by a bounded subalgebra. The notion of depth measures, in some sense, the largest bounded subalgebra which annihilates a vector in $V$. 

 It will be convenient to have the following notation. Let $\Phi$ denote the set of roots of $\fg$ with respect to a Cartan subalgebra $\ft$, and let  $\Phi^*:=\Phi\sqcup \{0\}$. Then we have the root decomposition 
 \begin{equation}\label{eq:decomposition} 
 \fg= \bigoplus_{\alpha \in \Phi^*} \fg^{\alpha}, 
 \end{equation} 
 where by definition $\fg^0=\ft$. Recall the description of Moy-Prasad subalgebras $\fg_{x,r^+}\subset \fgpp$ given in \eqref{eq:gxr+}.

 \begin{lem}\label{l:split}
  For all $(x,r)\in \cB(G) \times \bR_{\geq 0}$, the central extension \eqref{eq:central} is split over $\fg_{x,r^+}$. 
\end{lem}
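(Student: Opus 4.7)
The plan is to show that the $2$-cocycle \eqref{eq:2cocycle} vanishes identically on $\fg_{x,r^+} \times \fg_{x,r^+}$; granted this, the $\bC$-linear map $\xi \mapsto (\xi, 0)$ from $\fg_{x,r^+}$ to $\hfg_\kappa$ is automatically a Lie algebra homomorphism and provides the desired splitting of \eqref{eq:central}.

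By bilinearity, it suffices to test the cocycle on elementary tensors $X \otimes t^m$ and $Y \otimes t^n$ with $X \in \fg^\alpha$, $Y \in \fg^\beta$ for $\alpha, \beta \in \Phi^*$. Invariance of $\kappa$ forces $\kappa(X, Y) = 0$ unless $\beta = -\alpha$, while the residue formula $\Res_{t=0} t^m d(t^n) = n \,\delta_{m+n, 0}$ forces $m + n = 0$ in any nontrivial contribution. Thus the problem reduces to ruling out, in $\fg_{x,r^+}$, the simultaneous presence of a pair of elementary tensors with $\beta = -\alpha$ and $m + n = 0$.

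The case $\alpha = 0$ is immediate: since $r \geq 0$ gives $\lceil -r \rceil \leq 0$, the description \eqref{eq:gxr+} forces $m, n \geq 1$, whence $m + n \geq 2$. For $\alpha \neq 0$, set $a := \alpha(x)$; then \eqref{eq:gxr+} requires
\[ m \geq 1 - \lceil a - r \rceil \qquad \text{and} \qquad n \geq 1 - \lceil -a - r \rceil, \]
so $m + n \geq 2 - \bigl( \lceil a - r \rceil + \lceil -a - r \rceil \bigr)$. Substituting $b = a - r$ and using $\lceil -b - 2r \rceil = -\lfloor b + 2r \rfloor$ together with the elementary inequalities $\lfloor b + 2r \rfloor \geq \lfloor b \rfloor$ (valid for $r \geq 0$) and $\lceil b \rceil \leq \lfloor b \rfloor + 1$, I obtain $\lceil a - r \rceil + \lceil -a - r \rceil \leq 1$, and hence $m + n \geq 1$, which is incompatible with $m + n = 0$.

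I do not anticipate any serious obstacle: the argument is a direct manipulation of the Moy-Prasad description \eqref{eq:gxr+} combined with an elementary ceiling-floor inequality. The only mildly delicate point is the boundary behaviour at integer values of $a \pm r$, where several of the ceilings are attained simultaneously and the bound above is saturated; the conclusion $m + n \geq 1$ nevertheless holds in all cases.
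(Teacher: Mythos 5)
Your proof is correct and follows essentially the same strategy as the paper's: decompose into root spaces, use invariance of $\kappa$ to force opposite roots, and show via the Moy--Prasad grading that the residue term then necessarily vanishes. The only difference is cosmetic: you carry out the ceiling/floor inequality directly for all $r \geq 0$, whereas the paper first reduces to $r=0$ by observing $\fg_{x,r^+} \subseteq \fg_{x,0}$.
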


\begin{proof}
It is enough to prove the lemma for $x\in \mathcal A$ and  $r=0$. Consider the two-cocycle \eqref{eq:2cocycle}. Suppose $x\otimes f(t)$ and $y\otimes g(t)$ are in $\fg_{x}$ and $x\in \fg^\alpha$. Then if we want $\kappa(x,y)\neq 0$ then we must have $y\in \fg^{-\alpha}$. The case of $\alpha=0$ is immediate, so assume $\alpha\neq 0$.Then, we have that $f\in \cP^{1-\lfloor \alpha(x) \rfloor}$ and $dg\in \cP^{-\lfloor -\alpha(x) \rfloor}$. It follows that $fdg$ is regular; hence, its residue is zero. 
\end{proof} 

\begin{defe} Let $V\in \hfg_\kappa-\modd$. Define 
\[
d(V):=\inf \{ r\in \bR_{\geq 0} \, | \, \exists \textrm{$x\in \cB(G)$ such that $V^{\fg_{x,r^+}}$ is non-empty} \}. 
\]
\end{defe} 

Using optimal points, one can show that the depth of every smooth module is a rational number.

\begin{rem} \label{r:intDepth} 
If $V$ is $G_{x,r^+}$ integrable, then $V^{\fg_{x,r^+}}$ is non-empty. Indeed, since $V$ is smooth, there exist $s>r$
such that $W^{\fg_{x,s}}$ is non-empty. Let $w_0\in W^{\fg_{x,s}}$ be any nonzero vector
and consider the vector sub-space $W_0=U(\fg_{x,r^+})\cdot w_0\subset W$. 
The space $W_0$ carries a $\fg_{x,r^+}$-action and this action factors through the 
quotient $\fg_{x,r^+}/\fg_{x,s}$.  The action of $\fg_{x,r^+}/\fg_{x,s}$
integrates to an action of $G_{x,r^+}/G_{x,s}$. The latter is a finite dimensional unipotent algebraic group; therefore,
by Kolchin's Theorem, $W_0$ contains a non-zero $G_{x,r^+}$-invariant  vector. Thus,
$W^{G_{x,r^+}}=W^{\fg_{x,r^+}}$ is non-empty. 
\end{rem}

 \begin{lem}  \label{c:depthV}
  Let $V\in \hfg-\modd$ be a smooth module at the critical level with depth $r$. Then the central support of $V$ is a subscheme of $\Op_\hG^r$. 
\end{lem}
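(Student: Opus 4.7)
The plan is to deduce this directly from Theorem \ref{t:support}, by realising submodules of $V$ as images of induced modules of the form $\bU_{x,r}$.

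By the definition $d(V)=r$, combined with the discreteness of the set of critical numbers at optimal points of $\cB(G)$ (so that the infimum is attained), I would first choose $x\in\cB(G)$ and a nonzero $v\in V$ with $\fg_{x,r^+}\cdot v = 0$. Since $V$ is at the critical level, the central element $\bone$ acts as the identity on $V$, so the assignment $1\mapsto v$ is a morphism of $(\fg_{x,r^+}\oplus\bC\bone)$-modules $\bC\to V$; Frobenius reciprocity extends it uniquely to a nonzero $\hfg_c$-homomorphism $\phi\colon\bU_{x,r}\to V$. By Theorem \ref{t:support}, $\cZ_c\cong\bC[\Op_\hG(\cDt)]$ acts on $\bU_{x,r}$ through the quotient $\bC[\Op_\hG^r]$, so the cyclic submodule $\phi(\bU_{x,r})\subseteq V$ has central support contained in $\Op_\hG^r$.

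To obtain the same conclusion for all of $V$, I would let $V'\subseteq V$ be the $\hfg_c$-submodule generated by \emph{every} $\fg_{y,r^+}$-fixed vector of $V$, as $y$ ranges over $\cB(G)$. By the preceding paragraph, $V'$ has central support inside $\Op_\hG^r$, so it suffices to prove $V'=V$. I expect this last equality to be the main difficulty, since the infimum definition of depth does not automatically propagate to subquotients: quotients of smooth modules can in principle have strictly larger depth. My approach would be an argument by contradiction, combined with a transfinite iteration along the optimal points of the building: using smoothness, I would produce inside any nonzero $V/V'$ a new $\fg_{y,r^+}$-fixed vector, whose lift to $V$ ought to be forced back into $V'$ by the Frobenius-reciprocity machinery of the first paragraph (together with the long exact sequence associated to $0\to V'\to V\to V/V'\to 0$), yielding the desired contradiction.
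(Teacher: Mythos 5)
Your first paragraph is exactly the paper's proof: choose $x$ and a nonzero $v$ with $\fg_{x,r^+}\cdot v=0$, extend by Frobenius reciprocity to a nonzero map $\bU_{x,r}\to V$, and apply Theorem~\ref{t:support}.

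Your second paragraph correctly flags a real gap in that argument — Theorem~\ref{t:support} only controls the central support of the cyclic submodule generated by $v$, not of all of $V$ — but the fix you propose cannot work, because the lemma as literally stated is false for arbitrary smooth modules. For a counterexample, take $V=V_1\oplus V_2$ where $V_1$ has depth $r$ and $V_2$ is a smooth module of depth $r'>r$ admitting a central character $\chi_2$ with $s(\chi_2)=r'$ (such $V_2$ exist; e.g.\ a nonzero central reduction of $\bU_{y,r'}$). Then $d(V)=\min(r,r')=r$, while the central support of $V$ contains $\chi_2\notin\Op_{\hG}^{r}$. In particular no argument proving $V'=V$ for the submodule $V'$ you define can succeed; $V'$ here is $V_1$, a proper submodule. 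The intended reading — and the one used in the paragraph immediately following the lemma — is that $V$ admits a central character $\chi\in\Op_{\hG}(\cDt)$. Under that hypothesis every nonzero $\hfg_c$-submodule of $V$ has the same singleton central support $\{\chi\}$, so controlling the cyclic submodule generated by $v$ already controls $V$, and the paper's short proof is complete. If you want a version without the central-character hypothesis you should replace the conclusion by a statement about the cyclic submodule (or equivalently add ``$V$ cyclic'' as a hypothesis), not try to show $V'=V$.

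Two smaller points: the infimum in the definition of depth is attained, but this needs the discreteness of the optimal numbers together with the containment \eqref{optimal}, which you should cite rather than take for granted; and the paper's invocation of ``the previous remark'' (Remark~\ref{r:intDepth}) is a red herring here — since the definition of depth is already in terms of $\fg_{x,r^+}$-invariants, you do not need Kolchin's theorem, and your direct appeal to Frobenius reciprocity is the cleaner route.
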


\begin{proof} By assumption, there exists $x\in \cB(G)$ such that $V$ has a vector $v$ annihilated by $\fg_{x,r^+}$. By the previous remark, we have a nontrivial morphism $\bU_{x,r} \ra V$, sending the generating vector of $\bU_{x,r}$ to $v$.  The result follows from  Theorem \ref{t:support}. 
\end{proof}

Let $V$ be a smooth module of depth $r$ at the critical level admitting a central character $\chi\in \Op_\hG(\cDt)$. The previous lemma states that $s(\chi)\leq r$. We don't expect, in general, that $s(\chi)=r$. For instance, there are regular opers $\chi$ such that the central reduction $\bW_\Psi(\chi)$ of the affine Whitaker module with respect to $\phi$ is non-zero; see \S \ref{ss:global}. In this case, the depth $\bW_\Psi(\chi)$ is $1/h$, where as $s(\chi)=0$. This is one reason the notion of depth in the categorical setting is better behaved: we expect equality in that case (Conjecture \ref{c:main}).

 \ssec{Recollections on vertex algebras} \label{ss:vertex}
 We make some general (and obvious) observations about fields in vertex algebras. 
  Let $V$ be a vertex algebra. For $A\in V$, let 
\[
A(z)=Y(A,z)=\sum_m A_m z^{-m-1}
\]
be the corresponding field. Recall that $A_m\in \End(V)$. 
We write $[A(z)]_m$ for the $m$th Fourier coefficient of a field $A(z)$; that is, $[A(z)]_m=A_m$. 
For example, consider the normally ordered product 
\[
A(z)B(z)=\sum_{s \in \bZ} \left(   \sum_{r<0} A_r B_sz^{-r-1} + \sum_{m\geq 0} B_s A_r z^{-r-1} \right) z^{-s-1}
\]
The coefficient of $z^{-m-2}$ is equal to 
\[
\sum_{r+s=m, r<0} A_r B_s + \sum_{r+s=m, r\geq 0} B_s A_r. 
\]
We conclude that the coefficient $[A(z)B(z)]_m$ is a linear combination of the elements $A_r B_s$ or $B_sA_r$ where $r+s=m-1$. The following lemma is an obvious generalisation of this statement.

\begin{lem} 
If $A_1(z), \cdots, A_k(z)$ are fields, then $[:A_1(z)\cdots A_k(z):]_m$ can be written as a linear combination of elements of $[A_{\sigma(1)}(z)]_{m_1}\cdots [A_{\sigma(k)}(z)]_{m_k}$, where $\sigma$ is a permutation of $\{1,\cdots, k\}$ and $m_1+\cdots m_k = m-(k-1)$. 
\end{lem}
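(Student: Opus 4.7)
The plan is to argue by induction on $k$, with the $k=2$ case being precisely the computation carried out immediately before the lemma statement. The case $k=1$ is a tautology since $[A_1(z)]_m = [A_1(z)]_m$ with $m_1 = m = m-(k-1)$.

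For the inductive step, I would use the standard recursive definition of the nested normally ordered product, namely
\[
:A_1(z)A_2(z)\cdots A_k(z): \;=\; :A_1(z)\bigl(:A_2(z)\cdots A_k(z):\bigr):,
\]
and apply the $k=2$ case to the two fields $A_1(z)$ and $B(z) := {:A_2(z)\cdots A_k(z):}$. This shows that $[:A_1(z)B(z):]_m$ is a linear combination of terms of the form $[A_1(z)]_r [B(z)]_s$ and $[B(z)]_s [A_1(z)]_r$ with $r+s = m-1$.

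Next I would invoke the inductive hypothesis applied to $B(z)$: each Fourier coefficient $[B(z)]_s$ is a linear combination of products $[A_{\sigma'(2)}(z)]_{m_2}\cdots [A_{\sigma'(k)}(z)]_{m_k}$, where $\sigma'$ is a permutation of $\{2,\dots,k\}$ and $m_2+\cdots+m_k = s-(k-2)$. Substituting back and collecting indices, the total degree check gives
\[
r + m_2 + \cdots + m_k \;=\; r + s - (k-2) \;=\; (m-1) - (k-2) \;=\; m-(k-1),
\]
as required. Inserting $[A_1(z)]_r$ either before or after the product $[A_{\sigma'(2)}(z)]_{m_2}\cdots [A_{\sigma'(k)}(z)]_{m_k}$ produces an expression of the desired form for some permutation $\sigma$ of $\{1,\dots,k\}$ (namely $\sigma'$ extended by placing $1$ in the appropriate slot).

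The argument is purely combinatorial book-keeping; there is no genuine obstacle. The one point that requires a little care is the index-shift: the $(k-1)$ in $m-(k-1)$ appears because each successive nesting contributes a single unit shift (inherited from the $k=2$ case, where $r+s = m-1$), and these shifts accumulate additively through the induction. Once this is observed, the rest is a direct substitution.
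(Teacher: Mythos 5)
Your proof is correct, and it formalises exactly the argument the paper has in mind: the paper only carries out the $k=2$ computation and calls the general case an ``obvious generalisation,'' and induction on $k$ via the right-nested convention $:A_1(z)\cdots A_k(z): \;=\; :A_1(z)\bigl(:A_2(z)\cdots A_k(z):\bigr):$ is the standard way to make that precise. (The only step left tacit, and also tacit in the paper, is that $B(z) := {:A_2(z)\cdots A_k(z):}$ is again a field, so the $k=2$ computation applies to the pair $A_1(z), B(z)$.)
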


 We apply the above considerations to the fields for the affine vertex algebra $V_c(\fg)$ at the critical level. Let $x\in \fg$ and suppose $n<0$. Then, in view of the isomorphism of vector spaces $V_c(\fg)\simeq U(t^{-1}\fg[t^{-1}])$, we can think of $x_n=x\otimes t^n$ as an element of $V_c(\fg)$. The field corresponding to this element is 
\[
x_n(z)=\frac{1}{(-n-1)!} \partial_z^{-n-1} \sum_{m\in \bZ} x_m z^{-m-1}.
\]
The Fourier coefficients can then be considered as elements of the completed universal enveloping algebra $\tU_c(\hfg)$ at the critical level, cf. \cite[\S 3]{FrenkelBook}.
Now, we have that $[x_n(z)]_m$ is a multiple of $x_{m+n+1}$. Thus, if $n=-1$, then $[x_{-1}(z)]_m=x_m$. 

Next, suppose $x^{1}, \cdots, x^{k}$ are  elements of $\fg$. Let $x=x_{n_1}^{1}\cdots x_{n_k}^{k}$ where  $n_j<0$ for all $j$. As above, we can think of $x$ as an element of $V_c(\fg)$. The corresponding field $x[z]$ is the normally ordered product $:x_{n_1}^{1}(z)\cdots x_{n_k}^{k}(z):$. Write $x_{[m]}=[x(z)]_m$ for the $m$th Fourier coefficient of the field $x[z]$. Then by the previous lemma, $x_{[m]}\in \tU_c(\hfg)$ is a linear combination of elements of the form
\[
[x_{n_{\sigma(1)}}^{{\sigma(1)}}(z)]_{m_1} \cdots [x_{n_{\sigma(k)}}^{{\sigma(k)}}(z)]_{m_k}.
\]

Thus, we obtain:

\begin{cor} The operator $x_{[m]}$ is a linear combination of monomials of the form 
\[
x_{m_1+n_{\sigma(1)}+1}^{{\sigma(1)}}\cdots x_{m_k+n_{\sigma(k)}+1}^{{\sigma(k)}},
\]
where $\sigma$ is some permeation of $\{1,\cdots, k\}$ and $m_i$'s are integers with $m_1+\cdots m_k = m-(k-1)$. 
\end{cor}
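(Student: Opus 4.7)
The plan is to deduce this corollary directly from the two preceding observations in \S \ref{ss:vertex}: namely the lemma on Fourier coefficients of normally ordered products and the explicit formula for the coefficients of a single field $x_n(z)$.

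First I would set $A_i(z) := x^i_{n_i}(z)$ and recall that by construction
\[
x[z] = Y(x_{n_1}^{1}\cdots x_{n_k}^{k},\, z) = \ :A_1(z)\cdots A_k(z):\, .
\]
Applying the previous lemma verbatim, $x_{[m]} = [\,:A_1(z)\cdots A_k(z):\,]_m$ is a linear combination of monomials
\[
[A_{\sigma(1)}(z)]_{m_1}\cdots [A_{\sigma(k)}(z)]_{m_k},
\]
indexed by permutations $\sigma \in S_k$ and integer tuples with $m_1+\cdots+m_k = m-(k-1)$.

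Second, I would insert the explicit identification $[x^j_{n_j}(z)]_{p}= c_{n_j,p}\, x^j_{p+n_j+1}$, where the scalar $c_{n_j,p}$ comes from expanding the differential operator $\tfrac{1}{(-n_j-1)!}\partial_z^{-n_j-1}$ acting on $\sum_\ell x^j_\ell z^{-\ell-1}$ and reading off the coefficient of $z^{-p-1}$. This is precisely the computation recorded just above the corollary, so no new input is needed; substituting it into the expression from the first step converts each factor $[A_{\sigma(j)}(z)]_{m_j}$ into a scalar multiple of $x^{\sigma(j)}_{m_j+n_{\sigma(j)}+1}$, yielding the stated form.

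Since the statement is an immediate synthesis of the two previous results, I do not anticipate any obstacle: the only thing to verify is that the sum of the new subscripts equals $m+k - (k-1) = m+1$ after accounting for the $+1$ shifts, which is consistent with the constraint $\sum m_i = m-(k-1)$ already produced by the normally ordered product lemma. Hence the bookkeeping works out and the corollary follows.
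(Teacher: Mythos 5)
Your proof is correct and takes essentially the same approach as the paper: apply the preceding lemma on Fourier coefficients of normally ordered products to write $x_{[m]}$ as a combination of $[A_{\sigma(1)}(z)]_{m_1}\cdots[A_{\sigma(k)}(z)]_{m_k}$ with $\sum m_i = m-(k-1)$, then substitute the explicit single-field formula $[x^j_{n}(z)]_{p}=c_{n,p}\,x^j_{p+n+1}$ into each factor. One minor arithmetic slip in your closing paragraph: the sum of the resulting subscripts is $\sum_j(m_j+n_{\sigma(j)}+1)=m+1+\sum_j n_j$, not $m+1$, but this side observation is not needed for the corollary.
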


 \subsection{Fourier coefficients acting on smooth modules} \label{ss:smooth}  
  Recall that convention of \eqref{eq:decomposition}. We write $x^\alpha$ for an element of $\fg^{\alpha}$ and $x_n^\alpha$ for the element $x^\alpha \otimes t^n$ in $\hfg_c$.  
 
 \begin{lem} \label{l:annihilate} 
 Let $V$ be a $\hfg_c$-module and let $v\in V$. For $\alpha \in \Phi^*$, let $r_\alpha\in \bZ$ be such that 
 \begin{enumerate} 
 \item $x_n^\alpha. v =0$, for all $n\geq r_\alpha$, and 
 \item  $r_\alpha + r_\beta \geq r_{\alpha+\beta}$. 
 \end{enumerate}

 Let $x=x_{n_1}^{\alpha_1}\cdots x_{n_k}^{\alpha_k}\in \tU_c(\hfg)$ and suppose $\sum_{j=1}^k n_j \geq \sum_{j=1}^k r_{\alpha_j}-k+1$. Then $x.v=0$. 
 \end{lem}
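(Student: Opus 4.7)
The plan is to prove the claim by induction on the number of factors $k$. The base case $k=1$ follows directly from hypothesis (1). For the inductive step, the first move is a pigeonhole observation: the hypothesis $\sum_j n_j \geq \sum_j r_{\alpha_j} - k + 1$ rules out the possibility that every $n_j$ satisfies $n_j \leq r_{\alpha_j} - 1$ (which would force $\sum n_j \leq \sum r_{\alpha_j} - k$), so at least one factor $x_{n_j}^{\alpha_j}$ has $n_j \geq r_{\alpha_j}$ and therefore annihilates $v$ by (1).

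Having located such an index $j$, I would commute $x_{n_j}^{\alpha_j}$ to the rightmost position via successive adjacent swaps. Each swap $x_n^\alpha x_m^\beta = x_m^\beta x_n^\alpha + [x^\alpha,x^\beta]_{n+m} + (\text{central})$ decomposes $x$ as a sum of a reordered product of length $k$, a length-$(k-1)$ term in which the pair $(x_{n_j}^{\alpha_j},x_{n_i}^{\alpha_i})$ has merged into a single factor $[x^{\alpha_j},x^{\alpha_i}]_{n_j+n_i}$ of root $\alpha_j+\alpha_i$, and a scalar multiple of a length-$(k-2)$ product which is nonzero only when $\alpha_i=-\alpha_j$ and $n_j+n_i=0$. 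After the full reordering, the reordered term has $x_{n_j}^{\alpha_j}$ applied first to $v$ and hence vanishes.

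The central technical step is then to verify that each of the length-$(k-1)$ and length-$(k-2)$ terms still satisfies the hypothesis of the lemma, so that the inductive hypothesis may be applied. For a length-$(k-1)$ commutator term, the sum of indices is preserved, while the sum of thresholds changes by $r_{\alpha_j+\alpha_i} - r_{\alpha_j} - r_{\alpha_i}$, which is $\leq 0$ by hypothesis (2). For a length-$(k-2)$ scalar term, two factors with $\alpha_i = -\alpha_j$ and $n_j+n_i=0$ have been removed, and condition (2) applied to $\alpha + (-\alpha) = 0$ governs the corresponding change in threshold sum.

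I expect the main obstacle to be the precise bookkeeping in the reduction step. The straightforward estimate gives $\sum n' \geq \sum r' - k + 1$ for the new product, whereas the inductive hypothesis at length $k-1$ requires the sharper bound $\sum n' \geq \sum r' - k + 2$; there is an apparent one-unit deficit. Closing this gap should exploit the fact that the chosen $j$ contributes nonnegative excess $n_j - r_{\alpha_j} \geq 0$, and will most likely require running a secondary induction on the total excess $\sum_j (n_j - r_{\alpha_j})$ in tandem with $k$, or making a sharper choice of $j$ (such as one maximising the excess), so that in the borderline case where (2) supplies no extra slack the required bound is recovered directly from the structure of the commutator produced.
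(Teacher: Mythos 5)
Your approach---induction on $k$, pigeonhole to locate an index $j$ with $n_j\geq r_{\alpha_j}$, then commuting that factor to the right and treating the resulting shorter products by induction---is exactly the route taken in the paper. You are in fact more careful than the paper in one respect: you flag the central term in $[x^\alpha_m,x^\beta_n]$ (relevant only when $\alpha=-\beta$ and $m+n=0$), which the paper silently omits. The one-unit deficit you identify at the end is a genuine gap, and it is present in the paper's argument as well: the paper asserts ``by assumption'' that $n_1+\cdots+n_k\geq r_{\alpha_1}+\cdots+r_{\alpha_k}$, but the stated hypothesis gives only $\sum n_j\geq \sum r_{\alpha_j}-k+1$. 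With the correct (weaker) hypothesis the commutator term of length $k-1$ satisfies $\sum n'\geq \sum r'-(k-1)$, one short of the $\sum r'-(k-2)$ that the inductive hypothesis at length $k-1$ demands, exactly as you observe.

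Unfortunately, your proposed fixes (secondary induction on the total excess, or choosing $j$ to maximise excess) cannot close this gap, because the lemma as stated is false. Take $\fg=\Sl_2$ with positive root $\alpha$, and set $r_\alpha=0$, $r_{-\alpha}=1$, $r_0=1$; these are exactly the values $r_\gamma=1-\lceil\gamma(x)-r\rceil$ used in the paper for $\alpha(x)=1/2$, $r=0$, and one may take $v$ to be the generating vector of $\bU_{x,0}$, so that $e_nv=0$ for $n\geq0$, $f_nv=0$ for $n\geq1$, $h_nv=0$ for $n\geq1$. Hypothesis (2) is satisfied, with equality $r_\alpha+r_{-\alpha}=r_0$. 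For $k=2$ and $x=e_0f_0$ we have $n_1+n_2=0=(r_\alpha+r_{-\alpha})-k+1$, so the lemma's hypothesis is met; yet $e_0f_0\,v=[e_0,f_0]\,v=h_0\,v$, which is nonzero in the induced module since $h_0\notin\fg_{x,0^+}$. The counterexample sits precisely at the borderline you identified, where (2) holds with equality, and there the excess does not increase on passing to the commutator term, so no induction on it can recover the missing unit. The proof the paper runs would in fact be valid if the hypothesis on $\sum n_j$ were strengthened to $\sum n_j\geq\sum r_{\alpha_j}$, but then the numerical bounds fed into Corollary~\ref{l:field}, Lemma~\ref{l:key}, and Theorem~\ref{t:support'} would need to be re-derived.
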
  
 
 \begin{proof} We apply induction on $k$. The result is clearly true for $k=1$. Now if $n_{\alpha_k}\leq r_{\alpha_k}$ for all $k$, we would have that $\sum_{j=1}^k n_j \leq \sum_{j=1}^k r_{\alpha_j}-k$, a contradiction. Thus, we have that $n_{\alpha_j}\geq r_{\alpha_j}$ for some $j$.
If $j=k$, then we have that $n_{\alpha_k}\geq r_{\alpha_k}$, and so $x_{n_k}^{\alpha_k}$ (and therefore $x$) annihilate $v$.  Otherwise, we move $x_{n_j}^{\alpha_j}$ to the right, using the equality 
 \begin{equation}\label{eq:sum}
 x_{n_1}^{\alpha_1} \cdots (x_{n_j}^{\alpha_j} x_{n_{j+1}}^{\alpha_{j+1}}) \cdots x_{n_k}^{\alpha_k}=
  x_{n_1}^{\alpha_1} \cdots (x_{n_{j+1}}^{\alpha_{j+1}} x_{n_j}^{\alpha_j})  \cdots x_{n_k}^{\alpha_k} + 
   x_{n_1}^{\alpha_1} \cdots [x_{n_j}^{\alpha_j}, x_{n_{j+1}}^{\alpha_{j+1}}] \cdots x_{n_k}^{\alpha_k}
 \end{equation}
 But note that $[x_{n_j}^{\alpha_j}, x_{n_{j+1}}^{\alpha_{j+1}}]=x_{n_j+n_{j+1}}^{\alpha_{j}+\alpha_{j+1}}$. Moreover, by assumption 
 \[
 (n_1 + \cdots + n_k) \geq r_{\alpha_1} + \cdots r_{\alpha_j} + r_{\alpha_{j+1}} +\cdots +  r_{\alpha_n}
  \geq r_{\alpha_1} + \cdots +r_{\alpha_j+\alpha_{j+1}} +\cdots +  r_{\alpha_n}
 \]
 Therefore, by induction, the second summand of the above annihilates $v$. Hence, we obtain 
 \[
  x_{n_1}^{\alpha_1} \cdots (x_{n_j}^{\alpha_j} x_{n_{j+1}}^{\alpha_{j+1}}) \cdots x_{n_k}^{\alpha_k}.v=
  x_{n_1}^{\alpha_1} \cdots (x_{n_{j+1}}^{\alpha_{j+1}} x_{n_j}^{\alpha_j})  \cdots x_{n_k}^{\alpha_k}.v
 \]
 Repeating the above procedure if necessary, we can move  $x_{n_j}^{\alpha_j}$ to the right-most position and annihilate $v$.  
  \end{proof}

\begin{cor} \label{l:field} 
Suppose we are in the set up of the previous lemma, and assume $x=x_{n_1}^{\alpha_1}\cdots x_{n_k}^{\alpha_k}$ is in $V_c(\hfg)$; that is, assume all $n_i<0$. Then, we have 
\[
x_{[m]}.v=0 \quad \quad \forall \, m\geq \sum_{i=1}^{k} ({r_{\alpha_i}} -n_i)-k.
\]
\end{cor}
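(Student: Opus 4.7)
The plan is to combine the preceding corollary, which expands $x_{[m]}$ as a linear combination of monomials $x^{\alpha_{\sigma(1)}}_{\tilde n_1}\cdots x^{\alpha_{\sigma(k)}}_{\tilde n_k}$ with $\tilde n_j := m_j + n_{\sigma(j)} + 1$ and $m_1+\cdots+m_k = m-(k-1)$, with Lemma \ref{l:annihilate} applied term by term. By linearity it suffices to show that every such monomial annihilates $v$ as soon as $m \geq \sum_{i=1}^{k}(r_{\alpha_i}-n_i)-k$.

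Fix then a permutation $\sigma$ and integers $m_1,\ldots,m_k$ with $\sum_j m_j = m-(k-1)$, and set $y := x^{\alpha_{\sigma(1)}}_{\tilde n_1}\cdots x^{\alpha_{\sigma(k)}}_{\tilde n_k}$. I would apply Lemma \ref{l:annihilate} to the reindexed family of roots $\alpha_{\sigma(1)},\ldots,\alpha_{\sigma(k)}$ with the new exponents $\tilde n_1,\ldots,\tilde n_k$. The conditions (1) and (2) of that lemma are formulated on the family $\{r_\alpha\}_{\alpha \in \Phi^*}$ rather than on any specific ordering, so they transfer verbatim. The lemma then yields $y\cdot v = 0$ provided
\[
\sum_{j=1}^{k} \tilde n_j \;\geq\; \sum_{j=1}^{k} r_{\alpha_{\sigma(j)}} - k + 1 \;=\; \sum_{j=1}^{k} r_{\alpha_j} - k + 1.
\]

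The one bit of bookkeeping left is the sum on the left. I would compute
\[
\sum_{j=1}^{k}\tilde n_j \;=\; \sum_{j=1}^{k} m_j \;+\; \sum_{j=1}^{k} n_{\sigma(j)} \;+\; k \;=\; (m-k+1) + \sum_{j=1}^{k} n_j + k \;=\; m + \sum_{j=1}^{k} n_j + 1,
\]
so the required inequality becomes $m \geq \sum_{j}(r_{\alpha_j} - n_j) - k$, which is exactly the hypothesis of the corollary. Consequently every monomial in the expansion of $x_{[m]}$ kills $v$, and therefore so does $x_{[m]}$.

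I do not expect a genuine obstacle here: the entire argument is index bookkeeping, and the only point that might require care is confirming that Lemma \ref{l:annihilate} applies after the permutation $\sigma$ — which, as noted, is immediate because the hypotheses are stated on the root-indexed family $\{r_\alpha\}$ and not on the ordering of the constituents of $x$.
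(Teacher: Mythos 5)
Your proof is correct and follows the same route as the paper's: expand $x_{[m]}$ via the preceding corollary into monomials with exponents $\tilde n_j = m_j + n_{\sigma(j)} + 1$, compute $\sum_j \tilde n_j = m + 1 + \sum_j n_j$, and apply Lemma \ref{l:annihilate} term by term. The remark that the hypotheses of Lemma \ref{l:annihilate} are permutation-insensitive is a nice bit of explicitness, but otherwise the arguments coincide.
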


\begin{proof} We know that 
$x_{[m]}$ is a linear combination of elements of the form 
\[
x_{m_1+n_{\sigma(1)}+1}^{\alpha_{\sigma(1)}}\cdots x_{m_k+n_{\sigma(k)}+1}^{\alpha_{\sigma(k)}},
\]
where $m_1+\cdots m_k = m-k+1$. Now observe that 
\[
\sum_{i=1}^k (m_i + n_{\sigma(i)}+1) = \sum_{i=1}^k m_i + \sum_{i=1}^k n_{\sigma(i)} + k = m+1 + \sum_{i=1}^k n_i 
\]
The assumption on $m$ implies that the above sum is greater than or equal to $\sum r_{\alpha_i}-k+1$. 
The result follows from the previous lemma. 
 \end{proof}

\begin{exam} Suppose we are in the situation of the corollary and $r_\alpha=n$ for all $\alpha$. In other words, $v$ is killed by $t^n\fgbb$.  Let $x$ be an element of degree $N$; that is, $N:=-\sum_{i} n_i\geq k$. Then the previous corollary implies that 
 \[
 x_{[m]}.v=0,\quad \quad \forall \, m\geq k.n+N-k=k(n-1)+N.
 \]
  Note that we always have $k\leq N$. Now suppose $n\geq 1$. Then $N(n-1)\geq k(n-1)$ and so 
  \[
  nN\geq k(n-1)+N.
  \]
   Hence, in this case, 
  we see that $x_{[m]}.v=0$ for all $m\geq nN$.   On the other hand, if $n=0$, then we get that 
  $x_{[m]}.v=0$ for all $m\geq N-k$. This is not the sharpest result one has. Indeed, it follows immediately from the vaccum axiom that $x_{[m]}.v=0$ for all $m\geq 0$. The reason we don't obtain this sharp result from our method is that we are not keeping track of the coefficients of the monomials in $S_{i,[n_i]}$. 
 \end{exam}

\ssec{Segal-Sugawara operators} \label{ss:Sugawara}
Let us recall some basic facts about Segal-Sugawara vectors.  Let $\{S_1, \cdots, S_\ell\}\in V_c(\fg)$ be a complete set of Segal-Sugawara vectors.
 Note that the Feigin-Frenkel centre is invariant under the action of the degree operator $t\partial_t$. Therefore, each homogenous component of any Segal-Sugawara vector is again a Segal Sugawara vector. Therefore, without the loss of generality, we may assume that $S_i$ is homogenous of degree $d_i+1$. 
 
\begin{lem} \label{l:SSVector}
The operators $S_i$ can be written as a linear combination of elements of the form $x_{n_1}^{\alpha_1} x_{n_2}^{\alpha_2}\cdots x_{n_k}^{\alpha_k}$ satisfying the following properties:
\begin{itemize} 
\item[(i)] $\sum_{j=1}^k n_j = -(d_i+1)$;
\item[(ii)] $k\leq d_i+1$;
\item[(iii)] $\sum_{j=1}^k \alpha_j =0$.
\end{itemize} 
\end{lem}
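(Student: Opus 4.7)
The plan is to expand $S_i$ in the Poincar\'e--Birkhoff--Witt basis of $V_c(\fg)\simeq U(t^{-1}\fg[t^{-1}])$ associated to a weight basis of $\fg$, and then read off each of the three assertions by tracking an appropriate grading that $S_i$ must respect.

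First I would invoke the translation (conformal) grading on $V_c(\fg)$, under which $x_n^\alpha$ has degree $-n$. Since $S_i$ is homogeneous of degree $d_i+1$, every PBW monomial $x_{n_1}^{\alpha_1}\cdots x_{n_k}^{\alpha_k}$ appearing in its expansion must satisfy $-\sum_j n_j = d_i+1$, which is exactly assertion (i). Assertion (ii) is then immediate: each factor has $-n_j\geq 1$, so $k\leq \sum_j(-n_j)=d_i+1$.

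For (iii) I would use the Cartan weight grading. The zero mode $h_0$ for $h\in \fh$ acts as a derivation on $V_c(\fg)$ with weight $\alpha$ on $x_n^\alpha$, so PBW monomials are simultaneous $\fh$-weight vectors of weight $\sum_j\alpha_j$. The key input, which I would cite from standard vertex algebra theory, is that the Feigin--Frenkel centre is contained in the $\fg[t]$-invariants of $V_c(\fg)$: indeed, if $S$ is central then $a_n\cdot S=0$ for every $a\in V_c(\fg)$ and every $n\geq 0$, and applying this with $a\in \fg\subset V_c(\fg)$ shows that $S$ is annihilated by all $x_n$ with $n\geq 0$. In particular $S_i$ is killed by $h_0$ for every $h\in \fh$, hence has Cartan weight zero. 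Choosing the PBW basis to consist of joint $\fh$-weight vectors, only monomials with $\sum_j\alpha_j=0$ can appear in the expansion of $S_i$, proving (iii).

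I do not foresee any real obstacle: the statement is essentially a bookkeeping consequence of two commuting gradings on $V_c(\fg)$ which the Segal--Sugawara vectors respect. The only mildly subtle point is the standard fact that the Feigin--Frenkel centre is $\fg[t]$-invariant, which I would verify directly from the definition of central elements of a vertex algebra as sketched above.
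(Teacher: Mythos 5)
Your proof is correct and takes essentially the same approach as the paper: part (i) by the conformal degree grading, (ii) as an immediate consequence of (i) since each $n_j$ is negative, and (iii) by the Cartan weight grading together with the fact that Segal--Sugawara vectors are annihilated by the zero modes of $\fh$. You simply spell out in more detail the standard fact that the centre of $V_c(\fg)$ is $\fg[t]$-invariant, which the paper takes as known.
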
 

\begin{proof} Part (i) is immediate from the fact that $S_i$ has degree $d_i+1$. Part (ii) follows from (i), because all $n_i$'s are negative. Part (iii) holds since every Segal Sugawara vector is annihilated, in particular, by elements $x\in \fg$. When $x$ runs over the Cartan subalgebra, this means that the weight of each vector is zero. 
\end{proof} 

Let $v$ denote the generating vector of $\bU_{x,r}$. According to \eqref{eq:gxr+}, $v$ is subject to the relation 
\[
x_s^\alpha.v=0, \quad \quad  \forall \,\,s\geq  1-\lceil \alpha(x)-r \rceil.
\]
We will need the following lemma in what follows.

\begin{lem}\label{l:key}
$\displaystyle S_{i,[m]}.v=0,\quad \quad \, \, m\geq - \left(\sum_{j=1}^k \lceil \alpha_j(x)-r \rceil\right)+d_i+1$. 
\end{lem}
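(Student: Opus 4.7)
The plan is to reduce this to Corollary \ref{l:field} applied to each monomial appearing in a Segal--Sugawara vector $S_i$.

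First, I would identify the numbers $r_\alpha$ for the generating vector $v \in \bU_{x,r}$. By the explicit formula \eqref{eq:gxr+} for $\fg_{x,r^+}$, the vector $v$ is annihilated by $x_n^\alpha$ for every $n \geq r_\alpha := 1-\lceil \alpha(x)-r\rceil$. To invoke Corollary \ref{l:field} I must check the subadditivity hypothesis $r_\alpha+r_\beta \geq r_{\alpha+\beta}$ of Lemma \ref{l:annihilate}. This is automatic from the fact that $\fg_{x,r^+}$ is a Lie subalgebra of $\fgpp$: the bracket $[x_n^\alpha, y_m^\beta]$ (modulo the central term, which is harmless since only $n+m=-1$ produces a nonzero cocycle and that case is subsumed) must again lie in $\fg_{x,r^+}$, which forces exactly the required inequality on the indices.

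Next, I would decompose $S_i$ using Lemma \ref{l:SSVector} as a linear combination of monomials $x_{n_1}^{\alpha_1}\cdots x_{n_k}^{\alpha_k}$ with $\sum_{j=1}^k n_j = -(d_i+1)$, and apply Corollary \ref{l:field} to each such monomial. The corollary says that the $[m]$-th Fourier coefficient of this monomial annihilates $v$ provided
\[
m \;\geq\; \sum_{j=1}^k (r_{\alpha_j}-n_j) - k \;=\; \sum_{j=1}^k \bigl(1-\lceil \alpha_j(x)-r\rceil - n_j\bigr) - k.
\]
Substituting $\sum_j n_j = -(d_i+1)$ collapses this to $-\sum_{j=1}^k \lceil \alpha_j(x)-r\rceil + d_i+1$, which is exactly the bound in the lemma. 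Summing over the (finitely many) monomials that make up $S_i$ then gives $S_{i,[m]}.v=0$ in that range.

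I do not anticipate a real obstacle here: once the subadditivity of the $r_\alpha$'s is in place, the statement is essentially a bookkeeping consequence of Corollary \ref{l:field} combined with the degree constraint $\sum n_j = -(d_i+1)$ on Segal--Sugawara vectors. The only mildly delicate point is the check that the commutator contribution to the cocycle does not spoil the subadditivity check, but this is handled by the splitting of the central extension over $\fg_{x,r^+}$ established in Lemma \ref{l:split}.
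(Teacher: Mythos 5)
Your proposal is correct and follows essentially the same route as the paper: read off $r_\alpha = 1 - \lceil \alpha(x) - r\rceil$ from \eqref{eq:gxr+}, verify the subadditivity $r_\alpha + r_\beta \geq r_{\alpha+\beta}$ (the paper checks the ceiling inequality $1-\lceil\alpha(x)-r\rceil + 1-\lceil\beta(x)-r\rceil \geq 1-\lceil\alpha(x)+\beta(x)-r\rceil$ directly, whereas you invoke that $\fg_{x,r^+}$ is a Lie subalgebra --- two phrasings of the same fact), then apply Corollary \ref{l:field} to the monomials from Lemma \ref{l:SSVector} and simplify using $\sum_j n_j = -(d_i+1)$. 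One small slip in your parenthetical: the cocycle \eqref{eq:2cocycle} is nonzero when $n+m=0$, not $n+m=-1$; this does not affect anything, since the subadditivity of the $r_\alpha$'s is a purely numerical statement about Moy--Prasad indices and is independent of the central extension.
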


\begin{proof} It is easy to check that we always have $1-\lceil \alpha(x)-r \rceil + 1-\lceil \beta(x)-r \rceil\geq 1-\lceil \alpha(x)+\beta(x)-r \rceil$. Therefore, we can use Corollary \ref{l:field} to conclude that $S_{i,[m]}.0=0$ for all 
\[
m\geq \sum_{j=1}^{k} ({r_{\alpha_j}} -n_j)-k =  \sum_{j=1}^k (1-\lceil \alpha(x)-r \rceil -n_j) -k
\]
By the previous lemma, the RHS equals $- \left(\sum_{j=1}^k \lceil \alpha_j(x)-r \rceil\right)+d_i+1$, as required.
\end{proof}

\ssec{Proof of the main theorem}
\label{ss:proof}
Let us first give a reformulation of Theorem \ref{t:support}, which makes clear our strategy for proving it.  Let $V_c(\fg)$ denote the affine vertex algebra at the critical level associated to $\fg$. Let $S_i\in V_c(\fg)$, $i=1,\cdots, \ell$, be a complete set of Segal Sugawara vectors. Let $S_{i,[n_i]}$ denote the corresponding Segal-Sugawara operators. (For a quick introduction to these objects see \cite[\S 2.2]{Molev}.) 
Feigin and Frenkel's Theorem states that the center at the critical level is a completion of the polynomial algebra freely generated on the variables $S_{i,[n_i]}$. It is easy to see that
Theorem \ref{t:support} is equivalent to the following

\begin{thm} \label{t:support'} For $(x,r)\in \cB(G)\times \bR_{\geq 0}$ and all integers $n_i\geq (d_i+1)(r+1)$, the operator 
$S_{i,[n_i]}$ acts trivially on the vacuum vector $v\in \bU_{x,r}$.
\end{thm}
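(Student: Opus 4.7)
The plan is to combine Lemma \ref{l:SSVector}, which gives a convenient decomposition of each Segal-Sugawara vector $S_i$ into monomials, with Lemma \ref{l:key}, which bounds the order above which the Fourier coefficients of a monomial kill the vacuum vector $v$. The theorem will then reduce to a short elementary inequality using the crucial property that the weights appearing in a Segal-Sugawara monomial sum to zero.

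More precisely, by Lemma \ref{l:SSVector} we may write $S_i$ as a finite linear combination of monomials of the form $x_{n_1}^{\alpha_1} \cdots x_{n_k}^{\alpha_k}$ with $k \leq d_i+1$, $\sum_{j=1}^k n_j = -(d_i+1)$, $n_j < 0$, and $\sum_{j=1}^k \alpha_j = 0$. For each such monomial $M$, Lemma \ref{l:key} gives
\[
M_{[m]}\, v = 0 \quad \text{whenever} \quad m \geq -\left(\sum_{j=1}^k \lceil \alpha_j(x) - r \rceil\right) + d_i + 1.
\]
Since $S_{i,[n_i]}$ is the corresponding linear combination of the $M_{[n_i]}$, it suffices to bound the right-hand side uniformly over all decomposing monomials.

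The key estimate is that the ceiling bound $\lceil y \rceil \geq y$ yields
\[
-\sum_{j=1}^k \lceil \alpha_j(x) - r \rceil \;\leq\; -\sum_{j=1}^k \big(\alpha_j(x) - r\big) \;=\; -\Big(\textstyle\sum_{j=1}^k \alpha_j\Big)(x) + kr \;=\; kr,
\]
where the last equality uses $\sum_j \alpha_j = 0$. Since $k \leq d_i+1$ and $r \geq 0$, we get $kr \leq (d_i+1)r$, hence
\[
-\sum_{j=1}^k \lceil \alpha_j(x) - r \rceil + d_i + 1 \;\leq\; (d_i+1)r + d_i + 1 \;=\; (d_i+1)(r+1).
\]
Thus, for any $n_i \geq (d_i+1)(r+1)$, the hypothesis of Lemma \ref{l:key} is satisfied for every monomial in the decomposition of $S_i$, so $S_{i,[n_i]}\, v = 0$.

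The argument is almost entirely mechanical once the two preparatory lemmas are in place; there is no serious obstacle. The only subtlety is to confirm that the inequality $1 - \lceil \alpha(x) - r \rceil + 1 - \lceil \beta(x) - r \rceil \geq 1 - \lceil (\alpha+\beta)(x) - r \rceil$ (needed to apply Lemma \ref{l:annihilate} inside the proof of Lemma \ref{l:key}) holds for all pairs of weights, but this is a standard consequence of the superadditivity of the ceiling, and is already handled in the statement of Lemma \ref{l:key}.
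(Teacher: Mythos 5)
Your proof is correct and takes essentially the same route as the paper: decompose $S_i$ via Lemma \ref{l:SSVector}, bound the annihilating Fourier modes via Lemma \ref{l:key}, and then use $\lceil y\rceil \geq y$ together with $\sum_j\alpha_j = 0$ and $k\leq d_i+1$ to get $(d_i+1)(r+1)$. The only cosmetic difference is that the paper first reduces to $x\in\cA$ by conjugacy; you leave this implicit (as it is already baked into the explicit description of $\fg_{x,r^+}$ used in Lemma \ref{l:key}), but it is worth stating.
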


\begin{proof} It is enough to prove the theorem for $x\in \cA$.
Note that $-\lceil \alpha_j(x)-r \rceil \leq -\alpha_j(x)+r$. Since $\sum_{j=1}^k \alpha_j(x)=0$, we conclude that  
$-\sum_{j=1}^k \lceil \alpha_j(x)-r \rceil \leq kr$. By the previous corollary, $S_{i,[m]}$ annihilates $v$ for $m\geq kr+(d_i+1)$. Since $k\leq d_i+1$, we see that $S_{i,[m]}$ annihilates $v$ for all $m\geq (d_i+1)(r+1)$, as required. 
\end{proof}


\appendix 
\section{Hecke eigensheaves and the depth conjecture} 
The goal of this appendix is to point out a connection between Conjecture \ref{c:main} and Zhu's conjecture on non-vanishing of Hecke eigensheaves. We note that this section is speculative and not self-contained. We refer the reader to \cite{BD}, \cite{FGross}, and \cite{Zhu}  for the notions we do not define here. 

 Let $X$ be a smooth projective curve over $\bC$. Let $\cG$ be an \emph{integral model} of $G$ over $X$. This means that $\cG$ is a fibre-wise connected smooth affine group scheme over $X$ whose generic fibre is isomorphic to $G$; that is, $\cG_{\bC(X)} = G_{\bC(X)}$. We let $\Bun_\cG(X)$ denote the moduli stack of $\cG$-torsors on $X$. One can define an analogue of Hitchin's map whose source is the cotangent stack $\T^*\Bun_\cG(X)$, cf. \cite{Zhu}.  In \emph{op. cit.}, Zhu indicates how the Beilinson-Drinfeld quantisation machinery \cite{BD} can be applied to this twisted setting. The input of this machine is a (global) oper $\chi\in \Loc_\hG(X)$, where $\chi$ is allowed to have meromorphic singularity at finitely many places. The output is a Hecke eigensheaf $\Aut_\chi$ on $\Bun_\cG$ with eigenvalue the local system underlying $\chi$.  Zhu conjectures that these Hecke eigensheaves are non-zero. 

Let us see how these global considerations help for constructing modules satisfying (i) and (ii). For $x\in X$,  let $\cO_x$ denote the completed local ring at $X$ and let $F_x$ denote its fraction field. Let $K_x=\cG(\cO_x)$.  One has a canonical morphism $G(F_x)/K_x\ra \Bun_\cG$. 
Let $\omega$ be the canonical line bundle on $\Bun_\cG$ and let $\cL_{c}$ denote the pull back of $\omega^{1/2}$ to $G(F_x)/K_x$. Let $\delta_e$ denote the delta $D$-module on $\Gr_{\cG,x}$ twisted by $\cL_{c}$. Let 
\[
\Vac_x:=\Gamma(\Gr_{\cG,x}, \delta_e). 
\]
In most (but not all) situations, we have $\Vac_x\simeq \Ind_{\Lie(K_x)\oplus \bC}^{\hfg}(\bC)$. 
By construction, we have $\Vac_x\in \hfg_{c,x}-\modd$. Let $\chi(x)$ denote the restriction of $\chi$ to $x$. Let $\Vac_x(\chi)$ denote the central reduction of $\Vac_x$ at $\chi(x)$. Using an easy global to local argument, we obtain:

\begin{lem} If the Hecke eigensheaf $\Aut_\chi$ is non-zero, then so is the module $\Vac_x(\chi)$. 
\end{lem}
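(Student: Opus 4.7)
The plan is to invoke the Beilinson--Drinfeld localization formalism, which constructs $\Aut_\chi$ from the local module $\Vac_x(\chi)$ by a functor that visibly sends zero to zero; the contrapositive then yields the lemma.

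First I would set up the uniformization picture. The map $G(F_x)/K_x \to \Bun_\cG$ presents $\Bun_\cG$ as a (stacky) quotient of the affine Grassmannian $\Gr_{\cG,x}$ by the group $\cG^{\mathrm{out}}_x$ of sections of $\cG$ regular away from $x$. In this description, $\cL_c$-twisted $\cD$-modules on $\Bun_\cG$ correspond to $\cG^{\mathrm{out}}_x$-equivariant $\cL_c$-twisted $\cD$-modules on $\Gr_{\cG,x}$, and taking global sections of such objects produces smooth $\hfg_{c,x}$-modules. The delta $\cD$-module $\delta_e$ has global sections $\Vac_x$, so the construction of $\Aut_\chi$ begins at $x$ by replacing $\Vac_x$ with its central reduction $\Vac_x(\chi)$.

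The key step is to identify $\Aut_\chi$ with the image of $\Vac_x(\chi)$ under the localization functor $\Loc_x : \hfg_{c,x}\textrm{-mod} \to \cD_{\cL_c}(\Bun_\cG)$. This is precisely the content of Beilinson and Drinfeld's construction (as extended by Zhu to the twisted setting): one forms $\Loc_x(\Vac_x(\chi))$ by inducing along $\cG^{\mathrm{out}}_x$ and descending along the uniformization, and the output is (isomorphic to) $\Aut_\chi$. The compatibility of central characters is encoded by the fact that the Feigin--Frenkel isomorphism, applied at each point, recovers the global oper $\chi$; this is what ensures that the local module at $x$ one must start with is exactly $\Vac_x(\chi)$, rather than the full $\Vac_x$.

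Once this identification is in place, the lemma is immediate: $\Loc_x$ is a functor, so $\Loc_x(0) = 0$. Hence if $\Vac_x(\chi) = 0$, then $\Aut_\chi = 0$, and contrapositively the non-vanishing of $\Aut_\chi$ forces $\Vac_x(\chi) \neq 0$. The main obstacle is purely expository: unpacking the precise form of the localization functor in the twisted setting of \cite{Zhu} (in particular checking that the central reduction at $\chi(x)$ on the local side matches the central reduction at $\chi$ on the global side). No additional analysis of $\Aut_\chi$ or of $\Vac_x(\chi)$ themselves is required; the whole proof is formal once the localization dictionary is in hand.
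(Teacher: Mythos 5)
Your argument is correct and matches what the paper surely has in mind (the paper merely says ``using an easy global to local argument'' without giving details): $\Aut_\chi$ is produced from $\Vac_x(\chi)$ by the Beilinson--Drinfeld--Zhu localization/coinvariants construction, which, being a functor, sends the zero module to the zero $\cD$-module, and the lemma is the contrapositive.
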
 

The module $\bV=\Vac_x(\chi)$ is, therefore, usually a $K_x$-integrable module with central support $\chi$. 

As an example of the success of this strategy, we can show that if $\chi$ is an oper of slope $1/h$, where $h$ is the Coxeter number of $\hG$, and if $\chi$ can be extended to a global oper on $\mathbb{P}^1$ as in \cite{FGross} and \cite{Zhu}, then the central reduction $\bW(\chi)$ is non-zero.\footnote{This fact is stated, without proof, in the last paragraph of \cite{FGross}.} Here, $\bW=\bW(\Psi)$ denotes the affine Whittaker module, cf. \cite{FGross}. It is easy to see that this module is $G_{x,1/h^+}$-integrable, where $x$ is the barycentre of the interior of an alcove. On the other hand, according to \cite{Zhu}, the global automorphic sheaves produced in this case are non-zero. The above lemma then implies that $\bW(\chi)$ is a non-zero $G_{x,1/h^+}$-integrable object in $\hfg_c-\modd_{\chi}$. Hence, for such opers $\chi$, we have 
\begin{equation} 
 d(\hfg_c-\modd_\chi) \leq 1/h. 
 \end{equation} 

Returning to the general situation, we are left with the following question: is it always possible to extend a local oper $\chi\in \Op_\hG(\cDt)$ to a global oper on $X$? We note that it is always possible to extend a meromorphic connection on the punctured disk to a global connection on $\mathbb{P}^1$ with regular singularity at zero, cf. \cite{Katz86}. The corresponding question for opers seems to be open.


 \begin{bibdiv}
\begin{biblist}

\bib{Noohi}{article}
{
    AUTHOR = {Aldrovandi, E.}
    Author={Noohi, B.},
     TITLE = {Butterflies. {I}. {M}orphisms of 2-group stacks},
   JOURNAL = {Adv. Math.},
  FJOURNAL = {Advances in Mathematics},
    VOLUME = {221},
      YEAR = {2009},
    NUMBER = {3},
     PAGES = {687--773},
}

\bib{Varadarajan}{article}
{
AUTHOR = {Babbitt, D.}
Author= {Varadarajan, V. S.},
     TITLE = {Formal reduction theory of meromorphic differential equations:
              a group theoretic view},
   JOURNAL = {Pacific J. Math.},
  FJOURNAL = {Pacific Journal of Mathematics},
    VOLUME = {109},
      YEAR = {1983},
    NUMBER = {1},
     PAGES = {1--80},
}

\bib{Beilinson}{incollection}
{
    AUTHOR = {Beilinson, A.},
     TITLE = {Langlands parameters for {H}eisenberg modules},
 BOOKTITLE = {Studies in {L}ie theory},
    SERIES = {Progr. Math.},
    VOLUME = {243},
     PAGES = {51--60},
 PUBLISHER = {Birkh\"auser Boston},
   ADDRESS = {Boston, MA},
      YEAR = {2006},
}

\bib{BD}{webpage}
{
 Author={Beilinson, A.},
 Author={Drinfeld, V.},
  Title={Quantization of {H}itchin's integrable system and {H}ecke eigensheaves},
 myurl={http://www.math.uchicago.edu/~mitya/langlands.html},
 Year={1997},
}

\bib{BDOper}{webpage}
{
 Author={Beilinson, A.},
 Author={Drinfeld, V.},
  Title={Opers},
 myurl={http://arxiv.org/pdf/math/0501398.pdf},
 Year={2005},
}

\bib{BS}{webpage}
{
 Author={Bremer, B.},
 Author={Sage, D.},
  Title={A theory of minimal K-types for flat G-bundles},
 myurl={http://arxiv.org/pdf/1306.3176.pdf},
 Year={2013},
}

\bib{BT1}{article}
{
    AUTHOR = {Bruhat, F.},
    Author={Tits, J.},
     TITLE = {Groupes r\'eductifs sur un corps local},
   JOURNAL = {Inst. Hautes \'Etudes Sci. Publ. Math.},
  FJOURNAL = {Institut des Hautes \'Etudes Scientifiques. Publications
              Math\'ematiques},
    NUMBER = {41},
      YEAR = {1972},
     PAGES = {5--251},
 }

\bib{BT2}{article}
{
    AUTHOR = {Bruhat, F.},
    Author={Tits, J.},
     TITLE = {Groupes r\'eductifs sur un corps local. {II}. {S}ch\'emas en
              groupes. {E}xistence d'une donn\'ee radicielle valu\'ee},
   JOURNAL = {Inst. Hautes \'Etudes Sci. Publ. Math.},
  FJOURNAL = {Institut des Hautes \'Etudes Scientifiques. Publications
              Math\'ematiques},
    NUMBER = {60},
      YEAR = {1984},
     PAGES = {197--376},
 }

\bib{Molev}{article} 
{
    AUTHOR = {Chervov, A. V.}
    Author={Molev, A. I.},
     TITLE = {On higher-order {S}ugawara operators},
   JOURNAL = {Int. Math. Res. Not. IMRN},
  FJOURNAL = {International Mathematics Research Notices. IMRN},
      YEAR = {2009},
    NUMBER = {9},
     PAGES = {1612--1635},
}

\bib{Deligne70}{book}
{
    AUTHOR = {Deligne, Pierre},
     TITLE = {\'{E}quations diff\'erentielles \`a points singuliers
              r\'eguliers},
    SERIES = {Lecture Notes in Mathematics, Vol. 163},
 PUBLISHER = {Springer-Verlag},
   ADDRESS = {Berlin},
      YEAR = {1970},
}

\bib{Drinfeld}{webpage}
{
 Author={Drinfeld, V.},
   Title={On a theorem of Kostant},
 myurl={http://www.math.uchicago.edu/~mitya/langlands/Kostant-Theorem.pdf},
 Year={2007},
}

\bib{DS}{incollection}
{
    AUTHOR = {Drinfeld, V. G.},
    Author={Sokolov, V. V.},
     TITLE = {Lie algebras and equations of {K}orteweg-de {V}ries type},
 BOOKTITLE = {Current problems in mathematics, {V}ol. 24},
    SERIES = {Itogi Nauki i Tekhniki},
     PAGES = {81--180},
 PUBLISHER = {Akad. Nauk SSSR Vsesoyuz. Inst. Nauchn. i Tekhn. Inform.},
   ADDRESS = {Moscow},
      YEAR = {1984},
}

\bib{FF92}{incollection}
{
    AUTHOR = {Feigin, B.},
    Author={Frenkel, E.},
     TITLE = {Affine {K}ac-{M}oody algebras at the critical level and
              {G}elfand-{D}iki\u\i\ algebras},
 BOOKTITLE = {Infinite analysis, {P}art {A}, {B} ({K}yoto, 1991)},
    SERIES = {Adv. Ser. Math. Phys.},
    VOLUME = {16},
     PAGES = {197--215},
 PUBLISHER = {World Sci. Publ., River Edge, NJ},
      YEAR = {1992},
   MRCLASS = {17B67 (81R10)},
  MRNUMBER = {1187549 (93j:17049)},
MRREVIEWER = {A. Hamid Bougourzi},
}

\bib{FrenkelBook}{book}
{
    AUTHOR = {Frenkel, E.},
     TITLE = {Langlands correspondence for loop groups},
    SERIES = {Cambridge Studies in Advanced Mathematics},
    VOLUME = {103},
 PUBLISHER = {Cambridge University Press},
   ADDRESS = {Cambridge},
      YEAR = {2007},
     PAGES = {xvi+379},
 }

\bib{FG}{incollection}
{
    AUTHOR = {Frenkel, E.},
    Author={Gaitsgory, D.},
     TITLE = {Local geometric {L}anglands correspondence and affine
              {K}ac-{M}oody algebras},
 BOOKTITLE = {Algebraic geometry and number theory},
    SERIES = {Progr. Math.},
    VOLUME = {253},
     PAGES = {69--260},
 PUBLISHER = {Birkh\"auser Boston},
   ADDRESS = {Boston, MA},
      YEAR = {2006},
}

\bib{FG09}{article}
{
    AUTHOR = {Frenkel, E.}, 
    Author={Gaitsgory, D.},
     TITLE = {Localization of {$\germ g$}-modules on the affine
              {G}rassmannian},
   JOURNAL = {Ann. of Math. (2)},
  FJOURNAL = {Annals of Mathematics. Second Series},
    VOLUME = {170},
      YEAR = {2009},
    NUMBER = {3},
     PAGES = {1339--1381},
 }

\bib{FG09b}{article}
{
    AUTHOR = {Frenkel, E.}, 
    Author={Gaitsgory, D.},
     TITLE = {Local geometric {L}anglands correspondence: the spherical
              case},
 BOOKTITLE = {Algebraic analysis and around},
    SERIES = {Adv. Stud. Pure Math.},
    VOLUME = {54},
     PAGES = {167--186},
 PUBLISHER = {Math. Soc. Japan},
   ADDRESS = {Tokyo},
      YEAR = {2009},
}

\bib{FG09c}{article}
{
    AUTHOR = {Frenkel, E.}, 
    Author={Gaitsgory, D.},
     TITLE = {{$D$}-modules on the affine flag variety and representations
              of affine {K}ac-{M}oody algebras},
   JOURNAL = {Represent. Theory},
  FJOURNAL = {Representation Theory. An Electronic Journal of the American
              Mathematical Society},
    VOLUME = {13},
      YEAR = {2009},
     PAGES = {470--608},
}

\bib{FGross}{article}
{
    AUTHOR = {Frenkel, E.}, 
    Author={Gross, B.},     
    TITLE = {A rigid irregular connection on the projective line},
   JOURNAL = {Ann. of Math. (2)},
  FJOURNAL = {Annals of Mathematics. Second Series},
    VOLUME = {170},
      YEAR = {2009},
    NUMBER = {3},
     PAGES = {1469--1512},
}

\bib{FZ}{article}
{
    AUTHOR = {Frenkel, E.},
    Author={Zhu, X.},
     TITLE = {Any flat bundle on a punctured disc has an oper structure},
   JOURNAL = {Math. Res. Lett.},
  FJOURNAL = {Mathematical Research Letters},
    VOLUME = {17},
      YEAR = {2010},
    NUMBER = {1},
     PAGES = {27--37},
}

\bib{Katz70}{article}{
    AUTHOR = {Katz, Nicholas M.},
     TITLE = {Nilpotent connections and the monodromy theorem:
              {A}pplications of a result of {T}urrittin},
   JOURNAL = {Inst. Hautes \'Etudes Sci. Publ. Math.},
  FJOURNAL = {Institut des Hautes \'Etudes Scientifiques. Publications
              Math\'ematiques},
    NUMBER = {39},
      YEAR = {1970},
     PAGES = {175--232},
}

\bib{Katz86}{article}{
    AUTHOR = {Katz, Nicholas M.},
     TITLE = {Local-to-global extensions of representations of fundamental
              groups},
   JOURNAL = {Ann. Inst. Fourier (Grenoble)},
  FJOURNAL = {Universit\'e de Grenoble. Annales de l'Institut Fourier},
    VOLUME = {36},
      YEAR = {1986},
    NUMBER = {4},
     PAGES = {69--106},
}

\bib{Katz87}{article}{
    AUTHOR = {Katz, Nicholas M.},
     TITLE = {On the calculation of some differential {G}alois groups},
   JOURNAL = {Invent. Math.},
  FJOURNAL = {Inventiones Mathematicae},
    VOLUME = {87},
      YEAR = {1987},
    NUMBER = {1},
     PAGES = {13--61},
}

\bib{MP}{article}
{
    AUTHOR = {Moy, A.},
    Author={Prasad, G.},
     TITLE = {Unrefined minimal {$K$}-types for {$p$}-adic groups},
   JOURNAL = {Invent. Math.},
  FJOURNAL = {Inventiones Mathematicae},
    VOLUME = {116},
      YEAR = {1994},
    NUMBER = {1-3},
     PAGES = {393--408},
}

\bib{Langlands}{webpage} 
{
	Author={Langlands, R. P.}, 
	Title={Functoriality and Reciprocity - Two Lectures at IAS},
	myurl={http://publications.ias.edu/sites/default/files/functoriality.pdf},
	Year={2011}
}

\bib{EFM}{article}
{
    AUTHOR = {Musta{\c{t}}{\u{a}}, Mircea},
     TITLE = {Jet schemes of locally complete intersection canonical
              singularities},
      NOTE = {With an appendix by David Eisenbud and Edward Frenkel},
   JOURNAL = {Invent. Math.},
  FJOURNAL = {Inventiones Mathematicae},
    VOLUME = {145},
      YEAR = {2001},
    NUMBER = {3},
     PAGES = {397--424},
 }

\bib{Reeder-Yu}{article}
{
    AUTHOR = {Reeder, M.},
    Author={Yu, J. K.},
     TITLE = {Epipelagic representations and invariant theory},
   JOURNAL = {J. Amer. Math. Soc.},
  FJOURNAL = {Journal of the American Mathematical Society},
    VOLUME = {27},
      YEAR = {2014},
    NUMBER = {2},
     PAGES = {437--477},
}

\bib{Serre}{book}
{ 
Author={Serre, J. P.}, 
Title={Local class field theory},
Series={Graduate Text in Mathematics},
Volume={67},
Year={1979},
}

\bib{Yu-Model}{webpage}
{
 Author={Yu, J. K.},
  Title={Smooth models associated to concave functions in Bruhat-Tits Theory},
 myurl={http://www2.ims.nus.edu.sg/preprints/2002-20.pdf},
 Year={2002},
}

\bib{Yu-Ottawa}{incollection}
{
    AUTHOR = {Yu, J. K.},
     TITLE = {Bruhat-{T}its theory and buildings},
 BOOKTITLE = {Ottawa lectures on admissible representations of reductive
              {$p$}-adic groups},
    SERIES = {Fields Inst. Monogr.},
    VOLUME = {26},
     PAGES = {53--77},
 PUBLISHER = {Amer. Math. Soc.},
   ADDRESS = {Providence, RI},
      YEAR = {2009},
      }

\bib{Zhu}{webpage}
{
 Author={Zhu, X.},
  Title={Frenkel-Gross' irregular connection and Heinloth-Ngô-Yun's are the same},
 myurl={http://arxiv.org/abs/1210.2680},
 Year={2012},
}

\end{biblist} 
\end{bibdiv}

 \end{document}